\newtheorem{theorem}{Theorem}[section]
\newtheorem{lemma}[theorem]{Lemma}
\newtheorem{Not}[theorem]{Notation}
\theoremstyle{rem}
\theoremstyle{definition}
\newtheorem{definition}[theorem]{Definition}
\theoremstyle{construct}
\newtheorem{construct}[theorem]{Construction}
\theoremstyle{examp}
\newtheorem{Fact}[theorem]{Fact}
\newcommand\projective\mathbf%\mathbb
\newcommand\PP{\projective P}
\newcommand\OO{\mathcal O}
\newcommand\ZZ{\mathbb Z}
\newcommand\GG{\mathbb Gr}
\newcommand\onto\twoheadrightarrow
\newcommand\lra\longrightarrow
\newcommand\dar\downarrow
\DeclareMathOperator{\pic}{Pic}
\DeclareMathOperator{\im}{im}
\DeclareMathOperator{\cok}{coker}
\DeclareMathOperator{\rk}{rank}
\DeclareMathOperator{\Hom}{Hom}
\begin{document}

\title{Indecomposable bundles on Cartesian products of odd projective spaces}
\author{Damian M Maingi}
\date{April,2025}
\keywords{Monads, multiprojective spaces, simple vector bundles}
%2010 {\itshape AMS Mathematics Subject Classification: 14FO5, 14J60} 

\address{Department of Mathematics\\Catholic University of Eastern Africa\\P.O Box 62157, 00200 Nairobi}
\email{dmaingi@cuea.edu}

\maketitle

\begin{abstract}
In this paper we construct indecomposable vector bundles associated to monads on Cartesian products of odd dimension projective spaces.
Specifically we establish the existence of monads on $(\PP^1)^{l_1}\times\cdots\times(\PP^{2n+1})^{l_m}$.
We prove stability of the kernel bundle and prove that the cohomology bundle is simple. We also prove the same for monads on
$(\PP^n)^2\times(\PP^m)^2\times(\PP^l)^2$ for an ample line bundle $\mathscr{L}=\mathcal{O}_X(\alpha,\alpha,\beta,\beta,\gamma,\gamma)$.
\end{abstract}

% The article itself
\section{Introduction}
\noindent The existence of indecomposable low rank vector bundles on algebraic varieties in comparison with the dimension of the ambient space
has been a fertile area in algebraic geometry for the last 50 years. Nonetheless, it remains intriguing, fascinating and exciting to construct new examples of indecomposable low rank vector bundles.
Remarkable works in  this regard are: the famous Horrocks-Mumford bundle of rank 2 over $\PP^4$ \cite{6}, the Horrocks vector bundle of rank 3 on $\PP^5$ \cite{4},
other examples were given by Tango in \cite{16} and \cite{17} and all these were obtained as cohomologies of certain monads.\\
\\

\noindent Monads appear in many contexts within algebraic geometry and were first introduced by Horrocks\cite{5} where he proved that all vector bundles $E$ on $\PP^3$ could be obtained as the cohomology bundle of a given monad.
The goal of this paper is the construction of simple vector bundles associated to monads on Cartesian products of projective spaces. 
Fl\o{}ystad\cite{3} established the existence of monads on projective spaces $\PP^k$. Marchesi et al \cite{13} extended this further for more generalized projective varieties. 
Costa and Miro\cite{2} established existence of monads on smooth hyperquadrics.\\

\noindent Maingi in \cite{8} constructed bundles associated to monads on $\PP^{n}\times\PP^{m}$ of the form
\[\begin{CD}0@>>>\OO_{X}(-\rho,-\sigma)^{\oplus{\alpha}}@>>>{\OO^{\oplus{\beta}}_X} @>>>\OO_{X}(\rho,\sigma)^{\oplus{\gamma}}  @>>>0,\end{CD}\]
  Maingi in \cite{9} constructed bundles associated to monads on $\PP^{2n+1}\times\PP^{2n+1}$ of the form
\[\begin{CD}0@>>>\OO_{X}(-1,-1)^{\oplus{k}}@>>^{f}>{\OO^{\oplus{2n}\oplus{2k}}_X} @>>^{g}>\OO_{X}(1,1)^{\oplus{k}}  @>>>0,\end{CD}\]
 Maingi in \cite{11} established existence of monads $\PP^{n}\times\PP^{n}\times\PP^{m}\times\PP^{m}$ of the form
\[\begin{CD}0@>>>{\OO_X(-1,-1,-1,-1)^{\oplus k}} @>>>{\mathscr{G}_{n}\oplus\mathscr{G}_{m}}@>>>\OO_X(1,1,1,1)^{\oplus k} @>>>0.\end{CD}\]
He generalized these results in \cite{10} i.e. he established the existence of monads
\[\begin{CD}0@>>>{\OO_X(-1,\cdots,-1)^{\oplus k}} @>>^{f}>{\mathscr{G}_1\oplus\cdots\oplus\mathscr{G}_n}@>>^{g}>\OO_X(1,\cdots,1)^{\oplus k} @>>>0\end{CD}\]
on $X=\PP^{a_1}\times\PP^{a_1}\times\PP^{a_2}\times\PP^{a_2}\times\cdots\times\PP^{a_n}\times\PP^{a_n}$
where 
\begin{align*}
\mathscr{G}_1:=\OO_X(-1,0,0,\cdots,0)^{\oplus a_1+\oplus k}\oplus\OO_X(0,-1,0,0,\cdots,0)^{\oplus a_1+\oplus k}\\
\mathscr{G}_2:=\OO_X(0,0,-1,\cdots,0)^{\oplus a_2+\oplus k}\oplus\OO_X(0,0,0,-1,\cdots,0)^{\oplus a_2+\oplus k}\\
\cdots\cdots\cdots\cdots\cdots\cdots\cdots\cdots\cdots\cdots\cdots\cdots\cdots\cdots\cdots\cdots\cdots\cdots\cdots\\
\mathscr{G}_n:=\OO_X(0,0,\cdots,0,-1,0)^{\oplus a_n+\oplus k}\oplus\OO_X(0,0,\cdots,0,-1)^{\oplus a_n+\oplus k}.
\end{align*}
Part of the results in this paper generalize the results of the two immediately above mentioned papers in that the polarisation is $\mathscr{L}=\OO_X(\alpha,\alpha,\beta,\beta,\gamma,\gamma)$.
More recently Maingi in \cite{12} established existence of monads
\[
\begin{CD}
0@>>>\OO_{X}(-1,\cdots,-1)^{\oplus{k}}@>>^{f}>{\OO^{\oplus{2\mu}\oplus{2k}}_X} @>>^{g}>\OO_{X}(1,\cdots,1)^{\oplus{k}}  @>>>0\\
\end{CD}
\]
on a Cartesian product $X = (\mathbb{P}^1)^l\times(\mathbb{P}^3)^m\times(\mathbb{P}^5)^n$ of projective spaces,
where $l,m,n,k$ are positive integers and $\mu=2^{l+2m+n-1}3^n-1$. 
In this paper we extend these results to a Cartesian product of odd projective spaces $(\PP^1)^{l_1}\times(\PP^3)^{l_2}\times\cdots\times(\PP^{2n+1})^{l_m}$.
\vspace{0.5cm}

\noindent In this work we give generalizations for previous results by several authors. To be specific we build upon results by Maingi \cite{10,11,12} therefore the definitions, notation, the methods applied are quite similar and the trend follows the paper by 
Ancona and Ottaviani \cite{1}. \\

\noindent The main results in this paper are:

\begin{theorem}
 Let $X = (\PP^1)^{l_1}\times(\PP^3)^{l_2}\times\cdots\times(\PP^{2n+1})^{l_m}$,  be a Cartesian product of $l_1$ copies 
of $\PP^1$, $l_2$ copies of $\PP^3$, $\cdots$ and $l_m$ copies of $\PP^{2n+1}$. There exists a monad of the form
 \[
\begin{CD}
M_\bullet: 0@>>>\OO_{X}(-1,\cdots,-1)^{\oplus{k}}@>>^{\overline{A}}>{\OO^{\oplus{2\nu}\oplus{2k}}_X} @>>^{\overline{B}}>\OO_{X}(1,\cdots,1)^{\oplus{k}}  @>>>0\\
\end{CD}
\]
where $l_1,\cdots,l_m,\nu,k$ are positive integers and $\nu=2^{l_1-1}4^l_2\cdots(2n+2)^l_m-1$.

\begin{enumerate}
 \item The kernel of ${\overline{B}}$, $\ker({\overline{B}})$ is stable and
 \item The cohomology bundle $E=\ker{\overline{B}}/\im{\overline{A}}$ is indecomposable.
\end{enumerate}

\end{theorem}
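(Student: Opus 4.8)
The plan is to treat the statement in three stages: first exhibit the monad $M_\bullet$ (so that $\overline{A}$ is fiberwise injective, $\overline{B}$ fiberwise surjective, and $\overline{B}\,\overline{A}=0$), then prove stability of $\ker\overline{B}$, and finally deduce indecomposability of $E$ by showing it is simple. Throughout I would use the balanced polarization $\mathscr{L}=\OO_X(1,\dots,1)$ and the Künneth formula, since $X$ is a product and all the terms of $M_\bullet$ are (sums of) line bundles. The arithmetic hinge is that $h^0\bigl(\OO_X(1,\dots,1)\bigr)=\prod_i(d_i+1)=2^{l_1}4^{l_2}\cdots(2n+2)^{l_m}=2\nu+2$, exactly twice the number of ``extra'' trivial summands needed, which is precisely why the value $\nu=2^{l_1-1}4^{l_2}\cdots(2n+2)^{l_m}-1$ appears.

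For the construction I would follow the Ancona--Ottaviani pattern \cite{1} used in \cite{10,12}. The complete linear system $|\OO_X(1,\dots,1)|$ is base-point-free (it is the Segre image of base-point-free systems on each factor), so the evaluation surjection $\OO_X^{\oplus(2\nu+2)}\onto\OO_X(1,\dots,1)$ dualizes to a \emph{subbundle} inclusion $\OO_X(-1,\dots,-1)\hookrightarrow\OO_X^{\oplus(2\nu+2)}$; this furnishes the fiberwise-injective building block for $\overline{A}$. Because every factor $\PP^{2n+1}$ is odd, each $H^0(\OO_{\PP^{d_i}}(1))$ is even-dimensional and carries a nondegenerate skew form; their tensor product equips $\CC^{2\nu+2}$ with a nondegenerate pairing $\Phi$, and I would define $\overline{B}$ as the $\Phi$-twisted transpose of $\overline{A}$ together with the $2k-2$ auxiliary trivial summands. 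Skew-symmetry of $\Phi$ then forces $\overline{B}\,\overline{A}=0$, while the surplus summands and the genericity afforded by $2\nu+2=h^0(\OO_X(1,\dots,1))$ guarantee constant maximal rank of both maps; existence in the required numerical range is also covered by the general criteria of Marchesi et al.\ \cite{13}.

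For part (1) I would record the display of $M_\bullet$, giving $K:=\ker\overline{B}$ of rank $2\nu+k$ with $c_1(K)=-k(1,\dots,1)$, together with the Steiner-type presentation $0\to\OO_X(-1,\dots,-1)^{\oplus k}\to\OO_X^{\oplus(2\nu+2k)}\to K^\ast\to 0$ obtained by dualizing $0\to K\to\OO_X^{\oplus(2\nu+2k)}\to\OO_X(1,\dots,1)^{\oplus k}\to 0$. Stability I would prove by a Hoppe-type cohomological criterion adapted to the multiprojective setting: it suffices to show $H^0\bigl(X,(\wedge^{q}K)\otimes\OO_X(-\underline{t})\bigr)=0$ for $1\le q\le\rk K-1$ and $\underline{t}$ at the stability threshold. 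Taking wedge powers of the defining sequence and chasing the resulting filtration reduces every such group to $H^0$ of direct sums of line bundles $\OO_X(\underline{a})$ with some negative entries, which vanish by Künneth together with Bott vanishing on each factor. (Equivalently one invokes stability of Steiner bundles via the Bohnhorst--Spindler argument transported to $X$.)

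For part (2) I would prove $\Hom(E,E)=\CC$, whence $E$ is simple and a fortiori indecomposable. Using the four short exact sequences of the display, $0\to A\to K\to E\to 0$, $0\to K\to B\to C\to 0$, $0\to A\to B\to Q\to 0$, and $0\to E\to Q\to C\to 0$ with $A=\OO_X(-1,\dots,-1)^{\oplus k}$, $B=\OO_X^{\oplus(2\nu+2k)}$, $C=\OO_X(1,\dots,1)^{\oplus k}$, I would apply $\Hom(E,-)$ and $\Hom(-,E)$ and splice the long exact sequences. Each connecting step replaces $\Hom(E,E)$ by groups that, after one more application of the display, become cohomology of line bundles $\OO_X(\pm1,\dots,\pm1)$ and $\OO_X(\pm2,\dots)$ in a fixed degree range; these vanish again by Künneth and Bott, while $H^0(E)=H^0(E^\ast)=0$ follows from the stability established in part (1) together with $c_1(E)=0$. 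What survives is exactly the scalars. I expect the \textbf{main obstacle} to lie not in the homological bookkeeping but in the two places where the product geometry bites: verifying that $\overline{A}$ and $\overline{B}$ keep \emph{constant} rank at every point of the non-homogeneous variety $X$ (the genericity/base-point-freeness input), and assembling the simultaneous Künneth--Bott vanishing across factors of differing dimension and differing twists that both the Hoppe criterion and the $\Hom$-computation demand.
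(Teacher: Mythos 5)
Your three-stage plan is, in substance, the paper's own argument: the paper also produces the monad by transporting explicit instanton-type matrices through the Segre embedding $X\hookrightarrow\PP^{2\nu+1}$ (Construction 3.1) and then checking the numerical criterion of Marchesi--Marques--Soares quoted as Lemma 2.13 (Theorem 3.2); it proves stability of $T=\ker\overline{B}$ exactly as you propose, via the generalized Hoppe criterion, exterior powers of the defining sequence, and K\"unneth-type vanishing for line bundles of negative total degree (Lemma 3.3); and it deduces simplicity (hence indecomposability) of $E$ by the same display-chasing you describe, ending in the chain $1\leq h^0(T\otimes T^*)\leq h^0(E\otimes E^*)\leq h^0(E\otimes T^*)\leq 1$ (Theorem 3.4), using $H^0(T^*(-1,\dots,-1))=H^1(T^*(-1,\dots,-1))=0$ and simplicity of the stable bundle $T$.

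One step of your construction is wrong as stated, however: the tensor product of skew forms on the factors $H^0(\OO_{\PP^{d_i}}(1))$ is skew-symmetric only when the number of factors $l=\sum_i l_i$ is \emph{odd}; for even $l$ it is symmetric, and then the ``$\Phi$-twisted transpose'' prescription does not force $\overline{B}\,\overline{A}=0$. The slip is repairable (put a skew form on a single factor and symmetric nondegenerate forms on the rest, or choose any symplectic form on the even-dimensional space), and the paper sidesteps it entirely by writing $\overline{B}=\left(B_1\mid B_2\right)$ and $\overline{A}=\binom{A_1}{A_2}$ with Toeplitz blocks in the two halves $x_0,\dots,x_\nu$ and $y_0,\dots,y_\nu$ of the Segre coordinates, so that $\overline{B}\,\overline{A}=B_1A_1+B_2A_2=0$ by inspection; the only structural input is that $h^0(\OO_X(1,\dots,1))=2\nu+2$ is even, which you identified correctly. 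Relatedly, your appeal to ``genericity'' for constant maximal rank of $\overline{A}$ and $\overline{B}$ is an assertion rather than an argument; the explicit matrices settle it, since their rank can drop only where all Segre coordinates vanish simultaneously, which is impossible on $X$. With these two points repaired, your proposal coincides with the paper's proof.
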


\begin{theorem}
Let $\alpha,\beta,\gamma,l,m,n$ and $k$ be nonnegative integers. Then there exists a linear monad on $X = (\PP^{n})^2\times(\PP^{m})^2\times(\PP^{l})^2$ of the form;
\[\begin{CD}0\rightarrow{\OO_X(-\alpha,-\alpha,-\beta,-\beta,-\gamma,-\gamma)^{\oplus k}} @>>^{f}>{\mathscr{G}_{\alpha}\oplus\mathscr{G}_{\beta}\oplus\mathscr{G}_{\gamma}}@>>^{g}>\OO_X(\alpha,\alpha,\beta,\beta,\gamma,\gamma)^{\oplus k}\rightarrow0\end{CD}\]
where 
\begin{align*}
\mathscr{G}_{\alpha}:=\OO_X(-\alpha,0,0,0,0,0)^{\oplus n+\oplus k}\oplus\OO_X(0,-\alpha,0,0,0,0)^{\oplus n+\oplus k}\\
\mathscr{G}_{\beta}:=\OO_X(0,0,-\beta,0,0,0)^{\oplus m+\oplus k}\oplus\OO_X(0,0,0,-\beta,0,0)^{\oplus m+\oplus k}\\
\mathscr{G}_{\gamma}:=\OO_X(0,0,0,0,-\gamma,0)^{\oplus l+\oplus k}\oplus\OO_X(0,0,0,0,0,-\gamma)^{\oplus l+\oplus k}
\end{align*}
with the properties

\begin{enumerate}
 \item The kernel of $g$, $\ker(g)$ is stable and
 \item The cohomology bundle $E$ is indecomposable.
\end{enumerate}

\end{theorem}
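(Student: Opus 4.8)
\noindent The plan is to follow the template of Ancona--Ottaviani \cite{1} and the earlier papers \cite{10,11,12}, treating the three conjugate pairs of factors $(\PP^n)^2$, $(\PP^m)^2$, $(\PP^l)^2$ separately and then assembling. Throughout I write $\mathscr{G}=\mathscr{G}_{\alpha}\oplus\mathscr{G}_{\beta}\oplus\mathscr{G}_{\gamma}$ for the middle term and $\mathscr{L}=\OO_X(\alpha,\alpha,\beta,\beta,\gamma,\gamma)$ for the polarization. For the \emph{existence} of the monad I would build $f$ and $g$ block-wise. On a single factor $\PP^a$ the (twisted) Euler/Koszul sequence supplies an explicit injection and surjection whose entries are the degree-$\alpha$ (respectively $\beta$, $\gamma$) monomials in the homogeneous coordinates; stacking the two copies on each pair $(\PP^a)^2$ produces block maps into and out of $\mathscr{G}_{\alpha}$, $\mathscr{G}_{\beta}$, $\mathscr{G}_{\gamma}$. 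I would set $g=(g_{\alpha},g_{\beta},g_{\gamma})$ and $f=(f_{\alpha},f_{\beta},f_{\gamma})^{t}$ so that $g\circ f=0$ holds identically, the cancellation coming from the antisymmetry built into the monomial pairing. It then remains to check that at every point $x\in X$ the fibre map $f_x$ has rank $k$ and $g_x$ has rank $k$; this reduces to the fact that the relevant coordinates never vanish simultaneously on any single $\PP^a$. This makes $M_\bullet$ a monad with locally free cohomology $E$ of rank $\rk\mathscr{G}-2k=2(n+m+l)+4k$.

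\noindent For part (1), the \emph{stability} of $K:=\ker(g)$, I would invoke the Hoppe-type cohomological criterion for $\mathscr{L}$-stability on the multiprojective space $X$, whose Picard group is $\ZZ^{6}$. First I compute $\mu_{\mathscr{L}}(K)$ from the defining sequence
\[
0\lra K\lra\mathscr{G}\lra\OO_X(\alpha,\alpha,\beta,\beta,\gamma,\gamma)^{\oplus k}\lra 0.
\]
By the criterion it then suffices, for every line bundle $\OO_X(d_1,\dots,d_6)$ whose $\mathscr{L}$-degree lies in the prescribed range, to prove the vanishings $H^0(X,K\otimes\OO_X(d_1,\dots,d_6))=0$ and $H^0(X,K^{\vee}\otimes\OO_X(d_1,\dots,d_6))=0$. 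The first group injects into $H^0(\mathscr{G}\otimes\OO_X(d_1,\dots,d_6))$ via the sequence above, and the second is controlled by the dualised sequence together with a $H^1$ term of a line-bundle summand. By the K\"unneth formula, $H^0(\OO_X(e_1,\dots,e_6))\neq 0$ if and only if every $e_i\ge 0$, so the whole question collapses to checking that the $\mathscr{L}$-degree bound forces at least one strictly negative coordinate in each line-bundle summand of $\mathscr{G}$ and of $\mathscr{G}^{\vee}$.

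\noindent For part (2) I would deduce \emph{indecomposability} from \emph{simplicity}, i.e. from $\Hom(E,E)=\CC$. The tool is the display of the monad, namely the two short exact sequences
\[
0\lra\OO_X(-\alpha,-\alpha,-\beta,-\beta,-\gamma,-\gamma)^{\oplus k}\lra K\lra E\lra 0
\]
and $0\to K\to\mathscr{G}\to\OO_X(\alpha,\alpha,\beta,\beta,\gamma,\gamma)^{\oplus k}\to 0$. Applying $\Hom(E,-)$ and $\Hom(-,E)$ and chasing the resulting long exact sequences reduces $\Hom(E,E)$ to a finite collection of cohomology groups $H^p(X,E\otimes\OO_X(\pm\alpha,\pm\alpha,\dots))$, which are then annihilated by the same K\"unneth computation and the vanishing ranges already secured in the stability step. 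Since a simple bundle admits no nontrivial idempotent endomorphisms, $\operatorname{End}(E)=\CC\cdot\mathrm{id}$ yields that $E$ is indecomposable.

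\noindent I expect the main obstacle to be this last step. Controlling every intermediate $\operatorname{Ext}$ and cohomology term simultaneously across the six factors, and in particular verifying that the multiplicities $k$ in the flanking terms do not produce extra endomorphisms, is delicate bookkeeping; the weights $\alpha,\beta,\gamma$ must be tracked through each K\"unneth factor so that the degree inequalities genuinely force a negative coordinate in \emph{every} summand. The existence and stability steps are comparatively routine once the block maps from \cite{11,12} are set up, but the simplicity argument is where the numerical accounting is most likely to require care.
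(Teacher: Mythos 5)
Your existence step matches the paper's (Lemma 4.1 and Theorem 4.2 there do exactly what you sketch: block matrices $f=(f_\alpha\,f_\beta\,f_\gamma)$, $g$ the column stack, $f\cdot g=0$ by the antisymmetric pairing, and maximal rank because the coordinates of a single projective factor cannot all vanish), and your rank count $2(n+m+l+2k)$ is right. The genuine gap is in the stability step. What you invoke is not the criterion that is actually needed: on a polycyclic variety the generalized Hoppe criterion (the paper's Theorem 2.5 and Lemma 2.6) requires the vanishing $H^0(X,(\bigwedge^q K)\otimes\OO_X(B))=0$ for \emph{every} $q$ with $1\leq q\leq \rk(K)-1$ in the stated degree range, not merely $q=1$ together with the dual bundle. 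Since $\rk(K)=2(n+m+l)+5k>2$, checking only $H^0(K\otimes\OO_X(d_1,\dots,d_6))$ and $H^0(K^{\vee}\otimes\OO_X(d_1,\dots,d_6))$ rules out only rank-one subsheaves and corank-one quotients; a destabilizing subsheaf of intermediate rank $q$ produces a section of a twist of $\bigwedge^q K$, so the intermediate exterior powers are indispensable. The paper handles them by twisting the defining sequence of $T=\ker(g)$ by $\OO_X(p_1,\dots,p_6)$, applying the exterior-power complex (Fact 2.8) to obtain an injection $H^0(\bigwedge^q T(p_1,\dots,p_6))\hookrightarrow H^0(\bigwedge^q(\overline{\mathscr{G}}_{\alpha}\oplus\overline{\mathscr{G}}_{\beta}\oplus\overline{\mathscr{G}}_{\gamma}))$, and killing the right-hand side by K\"unneth (Theorem 2.10, Lemmas 2.11 and 2.12), since every line-bundle summand there has negative total degree. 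Your sketch omits this entire layer, and without it stability is not established.

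There is a second, smaller misstep in your simplicity argument: you propose to reduce $\Hom(E,E)$ to groups $H^p(X,E\otimes\OO_X(\pm\alpha,\dots))$ which are then ``annihilated by the same K\"unneth computation.'' K\"unneth does not apply to twists of $E$, which is not a sum of pullback line bundles, and the decisive group is in fact \emph{nonzero}: the paper's chain is $1\leq h^0(X,E\otimes E^*)\leq h^0(X,E\otimes T^*)=h^0(X,T\otimes T^*)=1$, where the middle equality comes from tensoring $0\to\OO_X(-\alpha,\dots,-\gamma)^{\oplus k}\to T\to E\to 0$ by $T^*$ and using $H^0(X,T^*(-\alpha,\dots,-\gamma))=H^1(X,T^*(-\alpha,\dots,-\gamma))=0$ (obtained from the dualized and twisted second display sequence), and the final equality is the linchpin you never state: $T$ stable $\Rightarrow$ $T$ simple $\Rightarrow$ $h^0(X,T\otimes T^*)=1$. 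So the simplicity of $E$ is inherited from the stability of $T$ through a sandwich of inequalities, not from vanishing alone; as written, your plan to ``annihilate'' every intermediate term would fail at $h^0(X,E\otimes T^*)$. Your final inference, simple $\Rightarrow$ indecomposable, is fine and is exactly how the paper passes from Lemma 4.4 to the theorem.
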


\section{Preliminaries}

\noindent In this section we define and give notation in order to set up for the main results.
Most of the definitions are from chapter two of the book by Okonek, Schneider and Spindler \cite{14}. In this paper we will work over an algebraically closed field of characteristic zero.

\begin{definition}
Let $X$ be a nonsingular projective variety. 
\begin{enumerate}
\renewcommand{\theenumi}{\alph{enumi}}
 \item A {\it{monad}} on $X$ is a complex of vector bundles:
\[\xymatrix{0\ar[r] & M_0 \ar[r]^{\alpha} & M_1 \ar[r]^{\beta} & M_2 \ar[r] & 0}\]
exact at $M_0$ and at $M_2$ i.e. $\alpha$ is injective and $\beta$ surjective.
\item The image of $\alpha$ is a subbundle of $B$ and the bundle $E = \ker(\beta)/\im (\alpha)$ and is called the cohomology bundle of the monad.
\end{enumerate}
\end{definition}

\begin{definition}
A monad \[\xymatrix{0\ar[r] & M_0 \ar[r]^{\alpha} & M_1 \ar[r]^{\beta} & M_2 \ar[r] & 0}\] has a display diagram of short exact sequences as shown below:
\[
\begin{CD}
@.@.0@.0\\
@.@.@VVV@VVV\\
0@>>>{M_0} @>>>\ker{\beta}@>>>E@>>>0\\
@.||@.@VVV@VVV\\
0@>>>{M_0} @>>^{\alpha}>{M_1}@>>>\cok{\alpha}@>>>0\\
@.@.@V^{\beta}VV@VVV\\
@.@.{M_2}@={M_2}\\
@.@.@VVV@VVV\\
@.@.0@.0
\end{CD}
\]
\end{definition}

\begin{definition}
Let $X$ be a nonsingular projective variety, let $\mathscr{L}$ be a very ample line sheaf, and $V,W,U$ be finite dimensional $k$-vector spaces.
A linear monad on $X$ is a complex of sheaves,
\[ M_\bullet:
\xymatrix
{
0\ar[r] & V\otimes {\mathscr{L}}^{-1} \ar[r]^{A} & W\otimes \OO_X \ar[r]^{B} & U\otimes \mathscr{L} \ar[r] & 0
}
\]
where $A\in \Hom(V,W)\otimes H^0 \mathscr{L}$ is injective and $B\in \Hom(W,U)\otimes H^0 \mathscr{L}$ is surjective.\\
The existence of the monad $M_\bullet$ is equivalent to: $A$ and $B$ being of maximal rank and $BA$ being the zero matrix.
\end{definition}

\begin{definition}
Let $X$ be a non-singular irreducible projective variety of dimension $d$ and let $\mathscr{L}$ be an ample line bundle on $X$. For a 
torsion-free sheaf $F$ on $X$ we define
\begin{enumerate}
\renewcommand{\theenumi}{\alph{enumi}}
 \item the degree of $F$ relative to $\mathscr{L}$ as $\deg_{\mathscr{L}}F:= c_1(F)\cdot \mathscr{L}^{d-1}$, where $c_1(F)$ is the first Chern class of $F$
 \item the slope of $F$ as $\nu_{\mathscr{L}}(F):= \frac{\deg_{\mathscr{L}}F}{rk(F)}$.
  \end{enumerate}
\end{definition}

\subsection{Hoppe's Criterion over polycyclic varieties.}
Suppose that the Picard group Pic$(X) \simeq \ZZ^l$ where $l\geq2$ is an integer then $X$ is a polycyclic variety.
Given a divisor $B$ on $X$ we define $\delta_{\mathscr{L}}(B):= \deg_{\mathscr{L}}\OO_{X}(B)$.
Then one has the following stability criterion ({\cite{7}, Theorem 3}):

\begin{theorem}[Generalized Hoppe Criterion]
 Let $G\rightarrow X$ be a holomorphic vector bundle of rank $r\geq2$ over a polycyclic variety $X$ equipped with a polarisation 
 $\mathscr{L}$.
 \\
 If \[H^0(X,(\wedge^sG)\otimes\OO_X(B))=0\] 
 for all $B\in\pic(X)$ and $s\in\{1,\ldots,r-1\}$ such that
 $\begin{CD}\displaystyle{\delta_{\mathscr{L}}(B)<-s\nu_{\mathscr{L}}(G)}\end{CD}$ then $G$ is stable and if
 $\begin{CD}\displaystyle{\delta_{\mathscr{L}}(B)\leq-s\nu_{\mathscr{L}}(G)}\end{CD}$ then $G$ is semi-stable.\\
\\
 Conversely if then $G$ is (semi-)stable then  \[H^0(X,G\otimes\OO_X(B))=0\]
 for all $B\in\pic(X)$ such that $\left(\delta_{\mathscr{L}}(B)\leq\right)$ $\delta_{\mathscr{L}}(B)<-\nu_{\mathscr{L}}(G)$.
\end{theorem}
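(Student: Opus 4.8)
The plan is to prove both implications by contraposition, the engine in each case being the standard dictionary between the slope data of sub- and quotient sheaves of $G$ and the vanishing of twisted cohomology. The only place the polycyclic hypothesis $\pic(X)\cong\ZZ^l$ is used is that every line bundle on $X$ — in particular the determinant of any coherent subsheaf of $G$ — has the form $\OO_X(B)$ for a unique $B\in\pic(X)$, so that $\delta_{\mathscr{L}}(B)=\deg_{\mathscr{L}}\OO_X(B)$ records exactly its $\mathscr{L}$-degree. This is precisely what lets the criterion quantify over all of $\pic(X)$, rather than over integer twists of a single ample generator as in the cyclic case.

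For the sufficiency direction I would argue the contrapositive: assume $G$ is not stable (resp. not semistable) and produce a twist violating the hypothesis. By definition there is a subsheaf $F\subset G$ with $s:=\rk F\in\{1,\dots,r-1\}$ and $\nu_{\mathscr{L}}(F)\geq\nu_{\mathscr{L}}(G)$ (resp. $>$); replacing $F$ by its saturation preserves the rank, does not lower the slope, and makes $G/F$ torsion-free. The key lemma is that the inclusion induces a nonzero section of $(\wedge^sG)\otimes(\det F)^{-1}$. Indeed $\wedge^s(F\hookrightarrow G)$ has torsion kernel and hence is injective since $\wedge^sF$ is torsion-free; and because $\wedge^sG$ is locally free, hence reflexive, while $\wedge^sF\hookrightarrow(\wedge^sF)^{**}=\det F$ has cokernel supported in codimension $\geq 2$, the map $\wedge^sF\to\wedge^sG$ extends to a nonzero map $\det F\to\wedge^sG$. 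Writing $\det F=\OO_X(-B)$ then gives $H^0(X,(\wedge^sG)\otimes\OO_X(B))\neq0$ with $\delta_{\mathscr{L}}(B)=-\deg_{\mathscr{L}}F=-s\,\nu_{\mathscr{L}}(F)\leq-s\,\nu_{\mathscr{L}}(G)$ (resp. $<$), contradicting the vanishing hypothesis.

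For the necessity (converse) direction I would take a nonzero section of $G\otimes\OO_X(B)$, read it as a nonzero map $\OO_X(-B)\to G$, and let $\mathcal{I}$ be its image, a rank-one subsheaf with $\deg_{\mathscr{L}}\mathcal{I}\geq\delta_{\mathscr{L}}(-B)=-\delta_{\mathscr{L}}(B)$. If $G$ is stable (resp. semistable) then $\nu_{\mathscr{L}}(\mathcal{I})<\nu_{\mathscr{L}}(G)$ (resp. $\leq$), so $-\delta_{\mathscr{L}}(B)\leq\nu_{\mathscr{L}}(\mathcal{I})<\nu_{\mathscr{L}}(G)$, which forces $\delta_{\mathscr{L}}(B)>-\nu_{\mathscr{L}}(G)$; contraposing yields the asserted vanishing of $H^0(X,G\otimes\OO_X(B))$ below the critical slope.

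The two technical inputs requiring the most care are the extension of $\wedge^sF\to\wedge^sG$ across $\det F$ — which rests on $\wedge^sG$ being reflexive and on the determinant cokernel living in codimension $\geq 2$, both automatic on the smooth $X$ — and, the genuinely delicate point, the bookkeeping of strict versus non-strict inequalities at the boundary value $\delta_{\mathscr{L}}(B)=-s\,\nu_{\mathscr{L}}(G)$. This boundary is exactly where stability and semistability part ways: an equality $\nu_{\mathscr{L}}(F)=\nu_{\mathscr{L}}(G)$ produces a nonzero section precisely at $\delta_{\mathscr{L}}(B)=-s\,\nu_{\mathscr{L}}(G)$, so one must track whether the destabilizing slope inequality is strict and match it against the hypothesis in order to separate the stable from the semistable conclusion. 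I expect this boundary bookkeeping, rather than the cohomological extension lemma, to be the main obstacle to a fully rigorous write-up.
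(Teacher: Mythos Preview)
The paper does not prove this theorem at all: it is quoted as the Generalized Hoppe Criterion from \cite{7} (Jardim--Menet--Prata--S\'a Earp, Theorem~3) and is used as a black box, so there is no in-paper proof to compare against. Your sketch is the standard argument for Hoppe-type criteria and is essentially the one given in the cited reference: destabilizing subsheaves produce nonzero sections of $\wedge^s G$ twisted by the inverse determinant via the extension of $\wedge^s F\to\wedge^s G$ over $\det F$, and conversely a section below the critical slope yields a rank-one subsheaf violating (semi)stability. Your identification of the two delicate points---the reflexivity/codimension-$2$ extension step and the strict/non-strict bookkeeping at the boundary $\delta_{\mathscr L}(B)=-s\,\nu_{\mathscr L}(G)$---is accurate, and nothing in your outline is wrong; it just goes beyond what the present paper attempts.
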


\begin{Not}

\noindent In section 3 the ambient space is $X=(\PP^1)^{l_1}\times(\PP^3)^{l_2}\times\cdots\times(\PP^{2n+1})^{l_m}$ then $\pic(X) \simeq \ZZ^l$, where
$\displaystyle{l=\sum_{i=1}^m{l_i}}$.
\\
Let $p_{11},\cdots,p_{1l_1}$ be natural projections  from $X$ onto $\PP^1$, 
$p_{21},\cdots,p_{2l_2}$ be natural projections  from $X$ onto $\PP^3$,
$\cdots\cdots\cdots$
$p_{m1},\cdots,p_{ml_m}$ be natural projections  from $X$ onto $\PP^{2n+1}$.\\

\noindent We shall denote by \\
$g_{1i}$, the generators of $\pic(X)$ corresponding to $p_{1i}^*\OO_{\PP^{1}}(1)$, for $i=1,\cdots,l_1$,\\
$g_{2j}$, the generators of $\pic(X)$ corresponding to $p_{2j}^*\OO_{\PP^{3}}(1)$, for $j=1,\cdots,l_2$,\\
$\cdots\cdots\cdots\cdots\cdots\cdots\cdots\cdots\cdots\cdots\cdots\cdots\cdots\cdots\cdots\cdots\cdots\cdots\cdots\cdots\cdots\cdots\cdots$ and\\
$g_{mk}$,the generators of $\pic(X)$ corresponding to $p_{mk}^*\OO_{\PP^{2n+1}}(1)$, for $k=1,\cdots,l_m$\\
and so $\pic(X)= \left\langle g_{11},\cdots,g_{ml_m}\right\rangle$ since $\pic(X) \simeq \ZZ^{l}$ with $\displaystyle{l=\sum_{i=1}^m{l_i}}$.\\

\noindent Next, we denote by
$\OO_X(g_{11},\cdots,g_{1l_1},g_{21}\cdots,g_{2l_2},\cdots g_{m1},\cdots,g_{ml_m}):= 
{p_{11}}^*\OO_{\PP^1}(g_{11})\otimes\cdots\otimes {p_{1l_1}}^*\OO_{\PP^1}(g_{1l_1})\otimes
{p_{21}}^*\OO_{\PP^3}(g_{21})\otimes\cdots\otimes {p_{2l_2}}^*\OO_{\PP^3}(g_{2l_2})\otimes\cdots\otimes
{p_{m1}}^*\OO_{\PP^{2n+1}}(g_{m1})\otimes\cdots\otimes {p_{ml_m}}^*\OO_{\PP^{2n+1}}(g_{ml_m})$\\

\noindent Suppose $h_{11},\cdots,h_{1l_1}$ are hyperplanes in $\PP^{1}$,
$h_{21},\cdots,h_{2l_2}$ are hyperplanes in $\PP^{3}$,
$\cdots\cdots\cdots$ and
$h_{m1},\cdots,h_{ml_m}$ are hyperplanes in $\PP^{2n+1}$,
with the intersection product induced by
$h^1_{11}=\cdots=h^1_{1l_1}= h^3_{21}=\cdots=h^3_{2l_2}= h^{2n+1}_{m1}=\cdots=h^{2n+1}_{ml_m}= 1$,\\
$h^2_{11}=\cdots=h^2_{1l_1}= h^4_{21}=\cdots=h^4_{2l_2}= h^{2n+2}_{m1}=\cdots=h^{2n+2}_{ml_m}= 0$,\\

\noindent For any line bundle $\mathscr{L} = \OO_X(g_{11},\cdots,g_{ml_m})$ on $X$ and a vector bundle $E$, we write 
$E(g_{11},\cdots,g_{ml_m}) = E\otimes\OO_X(g_{11},\cdots,g_{ml_m})$ 
and $(g_{11},\cdots,g_{ml_m}):= \OO_X(g_{11}h_{11}+\cdots+g_{ml_m}h_{ml_m})$ representing its corresponding divisor.\\

\noindent The ambient space in section 4 is the Cartesian product $X=(\PP^{n})^2\times(\PP^{m})^2\times(\PP^{l})^2$ and so then $\pic(X) \simeq \ZZ^{6}$.
\\
Suppose $\pi_{1n}$ and $\pi_{2n}$ are natural projections  from $X$ onto $\PP^{ n}$,\\
$\pi_{1m}$ and $\pi_{2m}$ are natural projections  from $X$ onto $\PP^{m}$ and \\
$\pi_{1l}$ and $\pi_{2l}$ are natural projections  from $X$ onto $\PP^{l}$.
\\
For all $i=1,2$, we denote by $h_{in}$ the generator of $\pic(X)$ corresponding to $\pi^*_{in}\OO_{\PP^n}(1)$,\\
$h_{im}$ the generator of $\pic(X)$ corresponding to $\pi^*_{im}\OO_{\PP^m}(1)$ and\\
$h_{il}$ the generator of $\pic(X)$ corresponding to $\pi^*_{il}\OO_{\PP^l}(1)$ and so $\pic(X)= \left\langle h_{1n},h_{2n},h_{1m},h_{2m},h_{1l},h_{2l}\right\rangle$.
\\
Denote by $\OO_X(h_{1n},h_{2n},h_{1m},h_{2m},h_{1l},h_{2l}):= \pi_{1n}^*\OO_{\PP^{n}}(h_{1n})\otimes \pi_{2n}^*\OO_{\PP^{n}}(h_{2n})\otimes {\pi_{1m}}^*\OO_{\PP^{m}}(h_{1m})\otimes {\pi_{2m}}^*\OO_{\PP^{m}}(h_{2m})\otimes {\pi_{1l}}^*\OO_{\PP^{l}}(h_{1l})\otimes \pi_{2l}^*\OO_{\PP^{l}}(h_{2l})$,
\\
Suppose $g_{1n}$ and $g_{2n}$ are hyperplanes in $\PP^{n}$,
$g_{1m}$ and $g_{2m}$ are hyperplanes in $\PP^{m}$ and
$g_{1l}$ and $g_{2l}$ are hyperplanes in $\PP^{l}$ with the intersection product induced by
$g_{in}^{n} = g_{im}^{m} = g_{il}^{l} =1$ and $g_{in}^{n+1} = g_{im}^{m+1} = g_{il}^{l+1} = 0$.
\\

\noindent The normalization of $E$ on $X$ with respect to $\mathscr{L}$ is defined as follows:\\
Set $d=\deg_{\mathscr{L}}(\OO_X(1,0,\cdots,0))$, since $\deg_{\mathscr{L}}(E(-k_E,0,\cdots,0))=\deg_{\mathscr{L}}(E)-nk\cdot \rk(E)$ 
there is a unique integer $k_E:=\lceil\nu_\mathscr{L}(E)/d\rceil$ such that  $1 - d.\rk(E)\leq \deg_\mathscr{L}(E(-k_E,0,\cdots,0))\leq0$. 
The twisted bundle $E_{{\mathscr{L}}-norm}:= E(-k_E,0,\cdots,0)$ is called the $\mathscr{L}$-normalization of $E$.

\end{Not}

\noindent The following lemma is actually a corollary of theorem 2.5 above, a special case of the generalized Hoppe criterion on stability.

\begin{lemma}
Let $X$ be a polycyclic variety with Picard number $n$, let $\mathscr{L}$ be an ample line bundle and
let E be a rank $r>1 $ vector bundle over $X$.
If $H^0(X,(\bigwedge^q E)_{{\mathscr{L}}-norm}(p_1,\cdots,p_{n})) = 0$ for $1\leq q \leq r-1$ and every 
$(p_1,\cdots,p_{n})\in \mathbb{Z}^{n}$ such that $\delta_{\mathcal L}(B)\leq0$, where $B:={\mathcal O}_X(p_1,..., p_n)$
then E is $\mathscr{L}$-stable.
\end{lemma}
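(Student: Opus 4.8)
The plan is to read the statement as a normalized repackaging of the Generalized Hoppe Criterion (Theorem~2.5) and to deduce it by absorbing the $\mathscr{L}$-normalizing twist into the divisor $B$. Throughout I set $d=\deg_{\mathscr{L}}(\OO_X(1,0,\ldots,0))$ and, for each $q$ with $1\le q\le r-1$, let $k_q$ denote the normalization constant of $\bigwedge^q E$, so that $(\bigwedge^q E)_{\mathscr{L}-norm}=(\bigwedge^q E)\otimes\OO_X(-k_q,0,\ldots,0)$ in the sense of the Notation above.

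First I would record the numerology of the exterior powers. From $c_1(\bigwedge^q E)=\binom{r-1}{q-1}c_1(E)$ and $\rk(\bigwedge^q E)=\binom{r}{q}$ one gets $\nu_{\mathscr{L}}(\bigwedge^q E)=q\,\nu_{\mathscr{L}}(E)$ for every admissible $q$. By the very definition of the $\mathscr{L}$-normalization, the sheaf $(\bigwedge^q E)_{\mathscr{L}-norm}$ has non-positive $\mathscr{L}$-degree, equivalently $q\,\nu_{\mathscr{L}}(E)\le k_q\,d$; this is the only property of $k_q$ I expect to need.

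Next I would verify the hypotheses of Theorem~2.5. Fix $q$ and a divisor $B=\OO_X(p_1,\ldots,p_n)$ lying in the Hoppe range $\delta_{\mathscr{L}}(B)<-q\,\nu_{\mathscr{L}}(E)=-\nu_{\mathscr{L}}(\bigwedge^q E)$. Setting $B'=\OO_X(p_1+k_q,p_2,\ldots,p_n)$ gives the identification $(\bigwedge^q E)\otimes\OO_X(B)=(\bigwedge^q E)_{\mathscr{L}-norm}\otimes\OO_X(B')$ together with the degree relation $\delta_{\mathscr{L}}(B')=\delta_{\mathscr{L}}(B)+k_q\,d$. The aim is to show $\delta_{\mathscr{L}}(B')\le 0$, so that $B'$ falls in the range covered by the hypothesis of the lemma and hence $H^0(X,(\bigwedge^q E)_{\mathscr{L}-norm}\otimes\OO_X(B'))=0$; since the two $H^0$ groups coincide, this is exactly the vanishing $H^0(X,(\bigwedge^q E)\otimes\OO_X(B))=0$ required to invoke Theorem~2.5 and conclude that $E$ is $\mathscr{L}$-stable.

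I expect the one delicate point to be the degree bookkeeping in the implication $\delta_{\mathscr{L}}(B)<-q\,\nu_{\mathscr{L}}(E)\Rightarrow\delta_{\mathscr{L}}(B')\le 0$. Because $k_q$ is defined through a ceiling, the quantity $k_q\,d$ can strictly exceed $q\,\nu_{\mathscr{L}}(E)$, so the strict Hoppe bound on $\delta_{\mathscr{L}}(B)$ does not translate verbatim into the non-strict bound on $\delta_{\mathscr{L}}(B')=\delta_{\mathscr{L}}(B)+k_q\,d$. To handle this I would use that $\delta_{\mathscr{L}}$ takes values in a discrete subgroup of $\ZZ$, the pertinent intersection numbers against $\mathscr{L}^{\dim X-1}$ being integers, so that the strict inequality $\delta_{\mathscr{L}}(B)<-\nu_{\mathscr{L}}(\bigwedge^q E)$ can only be realised at admissible lattice values below the threshold; one then checks that after the shift by $k_q\,d$ these values land in $\{\delta_{\mathscr{L}}\le 0\}$. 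This integrality-together-with-normalization step is the crux of the argument, and it is precisely where the choice of $k_q$ as the $\mathscr{L}$-normalization constant of $\bigwedge^q E$ is used.
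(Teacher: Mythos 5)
The paper offers no proof of this lemma at all---it is asserted as ``actually a corollary of theorem 2.5''---so the only comparison possible is against that intended reduction, and your proposal is exactly that reduction: absorb the normalizing twist into $B$ and invoke Theorem 2.5. Your bookkeeping is correct up to the last step: $\nu_{\mathscr{L}}(\bigwedge^q E)=q\,\nu_{\mathscr{L}}(E)$, the identification $(\bigwedge^q E)\otimes\OO_X(B)=(\bigwedge^q E)_{\mathscr{L}\text{-}norm}\otimes\OO_X(B')$, and $\delta_{\mathscr{L}}(B')=\delta_{\mathscr{L}}(B)+k_q d$ are all fine. But the integrality argument you lean on at the crux does not close the gap. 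From $k_q=\lceil q\nu_{\mathscr{L}}(E)/d\rceil$ one only gets $k_q d< q\nu_{\mathscr{L}}(E)+d$, so $\delta_{\mathscr{L}}(B)<-q\nu_{\mathscr{L}}(E)$ yields $\delta_{\mathscr{L}}(B')<d$, and integrality of $\delta_{\mathscr{L}}(B')$ improves this only to $\delta_{\mathscr{L}}(B')\le d-1$, not to $\delta_{\mathscr{L}}(B')\le 0$. Concretely: on $X=\PP^1\times\PP^1$ with $\mathscr{L}=\OO_X(1,2)$ one has $\delta_{\mathscr{L}}(\OO_X(a,b))=2a+b$ and $d=2$; if $\nu_{\mathscr{L}}(\bigwedge^q E)=\tfrac12$ then $k_q=1$, and $B=\OO_X(0,-1)$ lies in the Hoppe range $\delta_{\mathscr{L}}(B)=-1<-\tfrac12$, yet $\delta_{\mathscr{L}}(B')=1>0$, so the lemma's hypothesis supplies no vanishing for this twist. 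There is no monotonicity to fall back on either: $H^0$ of a twist with $\delta_{\mathscr{L}}>0$ is not controlled by vanishing at twists with $\delta_{\mathscr{L}}\le 0$.

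What your argument actually proves is the lemma under the extra hypothesis that the uncovered window is empty, i.e.\ that no integer value of $\delta_{\mathscr{L}}$ lies in $\bigl(0,\,-\nu_{\mathscr{L}}((\bigwedge^q E)_{\mathscr{L}\text{-}norm})\bigr)$ for each $q$---automatic when $d=1$, or whenever each normalized slope satisfies $-\nu_{\mathscr{L}}((\bigwedge^q E)_{\mathscr{L}\text{-}norm})\le 1$. To repair the general statement you would need the hypothesis strengthened to vanishing for all $B'$ with $\delta_{\mathscr{L}}(B')<-\nu_{\mathscr{L}}((\bigwedge^q E)_{\mathscr{L}\text{-}norm})$ (a range that includes some strictly positive values of $\delta_{\mathscr{L}}$), rather than merely $\delta_{\mathscr{L}}(B')\le 0$. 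You deserve credit for isolating precisely the delicate point---the paper glosses over it entirely---but the sentence ``one then checks that after the shift by $k_q d$ these values land in $\{\delta_{\mathscr{L}}\le 0\}$'' is exactly the assertion that fails, so as written the proof has a genuine gap.
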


\begin{Fact}
Let $0\rightarrow E \rightarrow F \rightarrow G\rightarrow0$ be an exact sequence of vector bundles.
Then we have the following exact sequence involving exterior and symmetric powers
\[0\lra\bigwedge^q E \lra\bigwedge^q F \lra\bigwedge^{q-1} F\otimes G\lra\cdots \lra F\otimes S^{q-1}G \lra S^{q}G\lra0\]
\end{Fact}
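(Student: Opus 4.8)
The plan is to treat this as a local, essentially algebraic, statement. Every sheaf appearing is locally free, so the displayed complex is exact if and only if it is exact on each stalk $\OO_{X,x}$; and over the local ring $\OO_{X,x}$ the sequence $0\to E_x\to F_x\to G_x\to0$ of free modules splits, because $G_x$ is free and hence projective. Thus I would first give natural (functorial) definitions of all the maps, so that they are defined globally and commute with localization, and then verify exactness after reducing to the split situation $F=E\oplus G$.

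For the differentials, write $C_p:=\bigwedge^{q-p}F\otimes S^{p}G$, so that $C_0=\bigwedge^{q}F$, $C_q=S^{q}G$, and the complex reads
\[
0\lra \bigwedge^{q}E \stackrel{\iota}{\lra} C_0 \stackrel{d_0}{\lra} C_1 \lra\cdots\lra C_q \lra 0,
\]
where $\iota=\bigwedge^{q}$ of the inclusion $E\hookrightarrow F$. Each $d_p\colon C_p\to C_{p+1}$ I would define as the composite of the comultiplication $\bigwedge^{q-p}F\to\bigwedge^{q-p-1}F\otimes F$, followed by the surjection $\pi\colon F\onto G$ applied to the new tensor factor, followed by the multiplication $G\otimes S^{p}G\to S^{p+1}G$ in the symmetric algebra. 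A direct check on decomposable tensors gives $d_{p+1}\circ d_p=0$ and $d_0\circ\iota=0$, so this is genuinely a complex; since each ingredient is natural in the sequence, the construction commutes with restriction to stalks.

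Granting the local reduction, assume $F=E\oplus G$ and decompose $\bigwedge^{q-p}(E\oplus G)=\bigoplus_{i+j=q-p}\bigwedge^{i}E\otimes\bigwedge^{j}G$. Because $\pi$ annihilates $E$, the differential acts only on the $G$-factors, so the whole complex splits as a direct sum over $i$ of the subcomplexes $\bigwedge^{i}E\otimes\mathcal{K}^{\bullet}_{t}$ with $t=q-i$, where $\mathcal{K}^{\bullet}_{t}$ denotes
\[
0\lra\bigwedge^{t}G\lra\bigwedge^{t-1}G\otimes G\lra\cdots\lra G\otimes S^{t-1}G\lra S^{t}G\lra0.
\]
For $i=q$ this summand contributes only $\bigwedge^{q}E$ in degree zero, which is matched isomorphically by $\iota$; for $i<q$ one has $t=q-i\geq1$, and tensoring the free module $\bigwedge^{i}E$ with an exact complex preserves exactness.

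Everything therefore reduces to the one genuinely substantive point, which I expect to be the main obstacle: the exactness of $\mathcal{K}^{\bullet}_{t}$ for $t\geq1$, the classical Koszul-type resolution linking exterior and symmetric powers. I would prove it by exhibiting a homotopy $h\colon\bigwedge^{a}G\otimes S^{b}G\to\bigwedge^{a+1}G\otimes S^{b-1}G$ that raises the exterior degree and lowers the symmetric degree by moving one factor across, and then checking on a local basis that $d\circ h+h\circ d$ equals multiplication by the total degree $a+b=t$. Since the paper works in characteristic zero and $t\geq1$, this scalar is invertible, so $\mathcal{K}^{\bullet}_{t}$ is contractible and hence exact. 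Reassembling the finite direct sum over $i$ then yields exactness of the original complex.
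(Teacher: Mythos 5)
Your proof is correct, but there is nothing in the paper to compare it against: the paper states this as Fact~2.8 with no proof at all, implicitly treating it as standard background (it is Exercise~II.5.16(d) in Hartshorne and appears in Chapter~II of Okonek--Schneider--Spindler \cite{14}, the source the paper cites for its preliminaries). Your argument is the standard textbook one, and all its steps check out: the differentials $d_p\colon\bigwedge^{q-p}F\otimes S^pG\to\bigwedge^{q-p-1}F\otimes S^{p+1}G$ built from comultiplication, $\pi\colon F\onto G$, and symmetric multiplication are natural, satisfy $d^2=0$ by the usual sign cancellation, and satisfy $d_0\circ\iota=0$ precisely because $E=\ker\pi$; exactness of a complex of coherent sheaves is a stalkwise condition; the stalk sequence splits since $G_x$ is free, hence projective; and the resulting decomposition into summands $\bigwedge^iE\otimes\mathcal{K}^\bullet_{q-i}$ correctly isolates $\bigwedge^qE$ as the $i=q$ contribution to $\ker d_0$, so that exactness at $C_0$ (including injectivity of $\bigwedge^tG\to\bigwedge^{t-1}G\otimes G$ for $t\geq1$) comes out of the same reduction rather than needing a separate check. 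The homotopy $h$ with $d\circ h+h\circ d=(a+b)\cdot\mathrm{id}$ is the classical Euler-type contraction, and invoking characteristic zero to invert $t=a+b$ is legitimate since the paper fixes an algebraically closed field of characteristic zero in Section~2. The only observation worth adding is that the characteristic assumption is avoidable: $\mathcal{K}^\bullet_t$ is the degree-$t$ graded strand of the Koszul resolution of the base ring by the symmetric algebra on the free module $G_x$, which is acyclic over any commutative ring, so the Fact holds in arbitrary characteristic; your homotopy argument trades that generality for a shorter verification, which is a perfectly reasonable exchange in the context of this paper.
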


\begin{theorem}[K\"{u}nneth formula]
 Let $X$ and $Y$ be projective varieties over a field $k$. 
 Let $\mathscr{F}$ and $\mathscr{G}$ be coherent sheaves on $X$ and $Y$ respectively.
 Let $\mathscr{F}\boxtimes\mathscr{G}$ denote $p_1^*(\mathscr{F})\otimes p_2^*(\mathscr{G})$\\
 then $\displaystyle{H^m(X\times Y,\mathscr{F}\boxtimes\mathscr{G}) \cong \bigoplus_{p+q=m} H^p(X,\mathscr{F})\otimes H^q(Y,\mathscr{G})}$.
\end{theorem}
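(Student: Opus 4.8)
The plan is to reduce everything to \v{C}ech cohomology with respect to compatible affine covers and then invoke the purely algebraic K\"unneth theorem for a tensor product of complexes of $k$-vector spaces, where the hypothesis that $k$ is a field is exactly what removes the $\mathrm{Tor}$ correction terms. First I would fix finite affine open covers $\mathcal{U}=\{U_i\}$ of $X$ and $\mathcal{V}=\{V_j\}$ of $Y$; since $X$ and $Y$ are projective they are separated and quasi-compact over $k$, so such finite covers exist and all finite intersections of the $U_i$ (resp. $V_j$) are again affine. The product cover $\mathcal{W}=\{U_i\times V_j\}$ is then a finite affine open cover of $X\times Y$, because a product of affine $k$-schemes is affine, $\mathrm{Spec}(A)\times_k\mathrm{Spec}(B)=\mathrm{Spec}(A\otimes_k B)$, and every finite intersection of its members, being a product of two affine intersections, is again affine. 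Because $\mathscr{F}\boxtimes\mathscr{G}$ is quasi-coherent and $\mathcal{W}$ is an affine (hence Leray) cover of the separated scheme $X\times Y$, \v{C}ech cohomology computes sheaf cohomology, so $\check H^\bullet(\mathcal{W},\mathscr{F}\boxtimes\mathscr{G})\cong H^\bullet(X\times Y,\mathscr{F}\boxtimes\mathscr{G})$, and likewise $\check H^\bullet(\mathcal{U},\mathscr{F})\cong H^\bullet(X,\mathscr{F})$ and $\check H^\bullet(\mathcal{V},\mathscr{G})\cong H^\bullet(Y,\mathscr{G})$.

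Next I would identify the \v{C}ech complex of the product cover with a tensor product of the two factor complexes. Writing $U_{i_0\cdots i_p}=U_{i_0}\cap\cdots\cap U_{i_p}=\mathrm{Spec}(A_{i_0\cdots i_p})$ and likewise for $\mathcal{V}$, the sections of $\mathscr{F}\boxtimes\mathscr{G}$ over the product $U_{i_0\cdots i_p}\times V_{j_0\cdots j_q}$ are exactly $\mathscr{F}(U_{i_0\cdots i_p})\otimes_k\mathscr{G}(V_{j_0\cdots j_q})$, since the associated module over $A_{i_0\cdots i_p}\otimes_k B_{j_0\cdots j_q}$ is the $k$-tensor product of the two modules. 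This lets me exhibit $\check C^\bullet(\mathcal{W},\mathscr{F}\boxtimes\mathscr{G})$ as cohomologically equivalent to the total complex of the double complex $\check C^\bullet(\mathcal{U},\mathscr{F})\otimes_k\check C^\bullet(\mathcal{V},\mathscr{G})$. I expect this identification to be the main obstacle: the two complexes are indexed differently --- the left-hand side by tuples of pairs $((i_0,j_0),\dots,(i_n,j_n))$ and the right-hand side by pairs of tuples $((i_0,\dots,i_p),(j_0,\dots,j_q))$ --- so they are not literally equal. The \v{C}ech complexes are the cochain complexes of the \v{C}ech nerves of the covers, the nerve of $\mathcal{W}$ is the product of the nerves of $\mathcal{U}$ and $\mathcal{V}$, and the Eilenberg--Zilber theorem supplies a natural chain-homotopy equivalence between the cochain complex of such a product and the tensor product of the factor complexes (via the Alexander--Whitney and shuffle maps). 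Producing these comparison maps and checking that they are mutually inverse up to homotopy is the technical heart of the proof.

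Finally I would apply the algebraic K\"unneth theorem to $\mathrm{Tot}\big(\check C^\bullet(\mathcal{U},\mathscr{F})\otimes_k\check C^\bullet(\mathcal{V},\mathscr{G})\big)$. Since $k$ is a field, every $k$-module is flat, so all higher $\mathrm{Tor}_i^k$ terms vanish and the short exact sequence of the algebraic K\"unneth theorem collapses to an isomorphism
\[
H^m\Big(\mathrm{Tot}\big(\check C^\bullet(\mathcal{U},\mathscr{F})\otimes_k\check C^\bullet(\mathcal{V},\mathscr{G})\big)\Big)\;\cong\;\bigoplus_{p+q=m}\check H^p(\mathcal{U},\mathscr{F})\otimes_k\check H^q(\mathcal{V},\mathscr{G}).
\]
Combining this with the \v{C}ech--to--sheaf identifications of the first paragraph yields $H^m(X\times Y,\mathscr{F}\boxtimes\mathscr{G})\cong\bigoplus_{p+q=m}H^p(X,\mathscr{F})\otimes_k H^q(Y,\mathscr{G})$, as claimed. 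I would stress that the hypothesis ``over a field $k$'' is precisely what guarantees the clean, $\mathrm{Tor}$-free form of the statement; over a general base ring one would instead obtain a spectral sequence carrying $\mathrm{Tor}$ terms.

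As an alternative that avoids the Eilenberg--Zilber bookkeeping, one could argue via the projection $p_2\colon X\times Y\to Y$: for the cartesian square with vertical arrows $p_2$ and $X\to\mathrm{Spec}(k)$, flat base change (valid since $Y\to\mathrm{Spec}(k)$ is flat and $X$ is proper) together with the projection formula give $R^q p_{2*}(\mathscr{F}\boxtimes\mathscr{G})\cong H^q(X,\mathscr{F})\otimes_k\mathscr{G}$, and because the coefficients $H^q(X,\mathscr{F})$ are $k$-vector spaces the complex $Rp_{2*}(\mathscr{F}\boxtimes\mathscr{G})$ splits as $\bigoplus_q\big(H^q(X,\mathscr{F})\otimes_k\mathscr{G}\big)[-q]$. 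The Leray spectral sequence for $p_2$ then degenerates and delivers the same decomposition; the degeneration plays here the role that the vanishing of $\mathrm{Tor}$ played above.
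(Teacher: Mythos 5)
Your proposal cannot be compared with a proof in the paper, because the paper gives none: the K\"{u}nneth formula appears in the Preliminaries (Theorem 2.9) as a quoted classical fact, used only through Lemma 2.11 to compute cohomology of line bundles on products of projective spaces. Judged on its own, your argument is correct and is essentially the standard proof (as in EGA III 6.7.8 or the Stacks Project): finite affine covers $\mathcal{U}$, $\mathcal{V}$ with affine product cover $\mathcal{W}$; the identification $(\mathscr{F}\boxtimes\mathscr{G})(U\times V)\cong \mathscr{F}(U)\otimes_k\mathscr{G}(V)$, which on affine pieces is the module computation $\bigl(M\otimes_A(A\otimes_k B)\bigr)\otimes_{A\otimes_k B}\bigl(N\otimes_B(A\otimes_k B)\bigr)\cong M\otimes_k N$; the Eilenberg--Zilber comparison between the \v{C}ech complex of $\mathcal{W}$ and $\mathrm{Tot}\bigl(\check{C}^\bullet(\mathcal{U},\mathscr{F})\otimes_k\check{C}^\bullet(\mathcal{V},\mathscr{G})\bigr)$; and the algebraic K\"{u}nneth theorem, which over a field has no $\mathrm{Tor}$ corrections. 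You correctly identify the Eilenberg--Zilber step as the technical heart; note that for it you should use the full (ordered-tuple) \v{C}ech complex rather than the alternating one, since it is the diagonal of the bicosimplicial vector space that Eilenberg--Zilber compares with the total complex --- harmless, as both complexes compute the same cohomology. Two small tightenings: in the first route, the precise statement needed is that $\check{C}^\bullet(\mathcal{W},\mathscr{F}\boxtimes\mathscr{G})$ \emph{is} that diagonal complex, which follows from $(U_{i_0}\times V_{j_0})\cap\cdots\cap(U_{i_n}\times V_{j_n})=U_{i_0\cdots i_n}\times V_{j_0\cdots j_n}$ together with the section formula above; in the alternative route, ``the coefficients $H^q(X,\mathscr{F})$ are $k$-vector spaces'' does not by itself split a complex of sheaves --- the clean justification is the derived identification $Rp_{2*}(\mathscr{F}\boxtimes\mathscr{G})\simeq R\Gamma(X,\mathscr{F})\otimes_k\mathscr{G}$ together with the fact that a complex of $k$-vector spaces is quasi-isomorphic to its cohomology, after which applying $R\Gamma(Y,-)$ gives the theorem. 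With those points made explicit, either of your two routes is a complete and correct proof of the statement the paper takes on faith.
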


\begin{theorem}[\cite{15}, Theorem 4.1, page 131]
 Let $n\geq1$ be an integer  and $d$ be an integer. We denote by $S_d$ the space of homogeneous polynomials of degree $d$ in 
 $n+1$ variables (conventionally if $d<0$ then $S_d=0$). Then the following statements are true:
 \begin{enumerate}
 \renewcommand{\theenumi}{\alph{enumi}}
  \item $H^0(\PP^n,\OO_{\PP^n}(d))=S_d$ for all $d$.
  \item $H^i(\PP^n,\OO_{\PP^n}(d))=0$ for $0<i<n$ and for all $d$.
  \item The vector space $H^n(\PP^n,\OO_{\PP^n}(d))$ is isomorphic to the dual of the vector space $H^0(\PP^n,\OO_{\PP^n}(-d-n-1))$.
 \end{enumerate}
\end{theorem}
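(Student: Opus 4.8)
The plan is to compute every group $H^i(\PP^n,\OO_{\PP^n}(d))$ simultaneously through the \v{C}ech complex of the standard affine cover, packaging the answer as a statement about the graded ring $S=k[x_0,\dots,x_n]$. Write $U_i=D_+(x_i)$ for $i=0,\dots,n$. Since $\PP^n$ is separated, each intersection $U_{i_0\cdots i_p}=D_+(x_{i_0}\cdots x_{i_p})$ is affine, and $\OO_{\PP^n}(d)$ is quasi-coherent, so the \v{C}ech cohomology of the cover $\{U_i\}$ computes sheaf cohomology. Summing over all $d\in\ZZ$ collapses the whole family of complexes into one complex of localizations of $S$, whose $p$-th term is $\bigoplus_{i_0<\cdots<i_p}S_{x_{i_0}\cdots x_{i_p}}$ with the alternating-sum differential; the graded degree-$d$ piece recovers the \v{C}ech complex of $\OO_{\PP^n}(d)$.

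Part (a) is then read off at the left end: $H^0$ is the kernel of the first differential, i.e.\ the compatible families $(s_i)$ with $s_i\in S_{x_i}$ agreeing on overlaps, which is exactly $\bigcap_i S_{x_i}=S$ taken inside $S_{x_0\cdots x_n}$. In graded degree $d$ this is $S_d$, and $S_d=0$ for $d<0$, giving (a). Part (c) is read off at the right end: $H^n$ is the cokernel of the top differential $\bigoplus_j S_{x_0\cdots \widehat{x_j}\cdots x_n}\to S_{x_0\cdots x_n}$. The image consists of those Laurent monomials in which at least one exponent is nonnegative, so the cokernel has as basis the monomials $x_0^{a_0}\cdots x_n^{a_n}$ with every $a_i\le -1$. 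Writing $b_i=-a_i-1\ge 0$ sends the degree-$d$ such monomials bijectively to the degree-$(-d-n-1)$ monomials, so $\dim H^n(\PP^n,\OO_{\PP^n}(d))=\dim S_{-d-n-1}=\dim H^0(\PP^n,\OO_{\PP^n}(-d-n-1))$; pairing a class against sections of $\OO_{\PP^n}(-d-n-1)$ by multiplying into the socle monomial $(x_0\cdots x_n)^{-1}$ and extracting its coefficient gives the duality isomorphism of (c).

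For part (b) one must show the localization complex is exact in the middle range $0<i<n$. I would handle this either directly, by a Koszul/combinatorial analysis showing that every middle \v{C}ech cocycle in $\bigoplus_d\OO_{\PP^n}(d)$ is a coboundary, or by induction on $n$ using the hyperplane restriction sequence $0\to\OO_{\PP^n}(d-1)\to\OO_{\PP^n}(d)\to\OO_H(d)\to0$ with $H\cong\PP^{n-1}$. In the inductive approach the long exact sequence together with the induction hypothesis (that $H^j(\PP^{n-1},\OO(d))=0$ for $0<j<n-1$) pins the intermediate groups between two zeros, after one first checks the base case and verifies that the restriction $H^0(\OO_{\PP^n}(d))\to H^0(\OO_H(d))$ is surjective, so that the connecting homomorphisms vanish where needed. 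The main obstacle is exactly this vanishing: whereas the $H^0$ and $H^n$ computations are essentially bookkeeping inside $S$, exactness in the intermediate degrees is the genuine content, requiring either the careful combinatorics of the localized polynomial ring or a clean setup of the dimension-shifting induction.
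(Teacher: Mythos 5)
The paper offers no proof of this statement at all: it is quoted as background directly from Perrin \cite{15} (Theorem 4.1, p.~131), so there is no internal argument to compare against; the relevant benchmark is the standard proof in the cited source (the same one as Hartshorne III.5.1), and your proposal is exactly that proof. Your treatment of (a) and (c) is complete and correct: assembling all twists into the single graded \v{C}ech complex of localizations of $S=k[x_0,\dots,x_n]$, reading $H^0$ as $\bigcap_i S_{x_i}=S$ inside $S_{x_0\cdots x_n}$, and identifying $H^n$ with the span of monomials $x_0^{a_0}\cdots x_n^{a_n}$, all $a_i\le -1$, with the substitution $b_i=-a_i-1$ and the residue pairing against $(x_0\cdots x_n)^{-1}$ giving the duality in (c).

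For (b), however, you have stated a plan rather than a proof, and you should be aware of where the inductive route genuinely bites. The restriction sequence $0\to\OO_{\PP^n}(d-1)\to\OO_{\PP^n}(d)\to\OO_H(d)\to 0$ with the induction hypothesis only pins $H^i$ cleanly for $1<i<n-1$; the boundary cases $i=1$ and $i=n-1$ need extra input. For $i=1$ the surjectivity of $S_d\to k[x_0,\dots,x_{n-1}]_d$ gives injectivity of $H^1(\OO(d-1))\to H^1(\OO(d))$, but when $n=2$ the group $H^1(\OO_H(d))=H^1(\PP^1,\OO(d))$ is nonzero for $d\le -2$, so for $i=n-1$ you must additionally show that the connecting map $H^{n-1}(\OO_H(d))\to H^n(\OO(d-1))$ is injective, which is most easily done with the explicit monomial bases from your part (c) computation. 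You also need an anchor for the induction on twists: the isomorphisms $H^i(\OO(d-1))\cong H^i(\OO(d))$ must terminate in a twist where vanishing is known (e.g.\ $d\gg 0$ via the degree-$d$ piece of the localization complex, or a direct check), since an isomorphism of unknown groups for all $d$ proves nothing by itself. None of this is an obstruction in principle --- it is precisely the bookkeeping carried out in \cite{15} --- but as written your (b) is an honest sketch, not yet an argument.
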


\begin{lemma}
Let $X=\PP^{a_1}\times\cdots\times\PP^{a_n}$, $0\leq p< \dim(X) -1$ and $k$ be a positive integer.
If  $\displaystyle{\sum_{i=1}^np_i<}0$ then $h^p(X,\OO_X (p_1,\cdots,p_{n})^{\oplus k}) = 0$ . 
\end{lemma}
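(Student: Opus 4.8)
The plan is to reduce to a single summand and then compute the cohomology of the resulting line bundle on the product by peeling off one projective factor at a time with the Künneth formula, controlling each tensor factor by the known cohomology of line bundles on a single projective space.

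First, since sheaf cohomology commutes with finite direct sums, one has $h^p(X,\OO_X(p_1,\ldots,p_n)^{\oplus k})=k\,h^p(X,\OO_X(p_1,\ldots,p_n))$, so it suffices to prove the vanishing for $k=1$. Writing $\pi_i\colon X\to\PP^{a_i}$ for the $i$-th projection, the line bundle factors as the external tensor product $\OO_X(p_1,\ldots,p_n)=\bigotimes_{i=1}^n\pi_i^*\OO_{\PP^{a_i}}(p_i)$, to which I would apply the Künneth formula (Theorem 2.9) inductively on the number of factors, obtaining
\[
H^p\bigl(X,\OO_X(p_1,\ldots,p_n)\bigr)\;\cong\;\bigoplus_{q_1+\cdots+q_n=p}\;\bigotimes_{i=1}^{n}H^{q_i}\bigl(\PP^{a_i},\OO_{\PP^{a_i}}(p_i)\bigr).
\]

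Next I would feed in the line-bundle cohomology on $\PP^{a_i}$ (Theorem 2.10): the group $H^{q_i}(\PP^{a_i},\OO_{\PP^{a_i}}(p_i))$ vanishes unless $q_i=0$, where it is nonzero precisely when $p_i\ge 0$, or $q_i=a_i$, where it is nonzero precisely when $p_i\le -a_i-1$. Consequently a summand in the Künneth decomposition can survive only for tuples with every $q_i\in\{0,a_i\}$; setting $S=\{\,i : q_i=a_i\,\}$, a nonzero summand forces simultaneously $p=\sum_{i\in S}a_i$, the inequalities $p_i\le -a_i-1$ for $i\in S$, and $p_i\ge 0$ for $i\notin S$. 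The whole problem thus reduces to showing that no subset $S\subseteq\{1,\ldots,n\}$ satisfies these conditions together with the standing hypotheses.

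Two extreme cases close immediately. If $S=\varnothing$ then $p=0$ and every $p_i\ge 0$, so $\sum_i p_i\ge 0$, contradicting $\sum_i p_i<0$; if $S=\{1,\ldots,n\}$ then $p=\sum_i a_i=\dim X$, contradicting $p<\dim X-1$. The main obstacle is precisely the intermediate range, where $S$ is a proper nonempty subset: one must rule out that $p=\sum_{i\in S}a_i$ can occur while $0\le p<\dim X-1$ and $\sum_i p_i<0$ all hold. I expect this to be the delicate heart of the argument, since it couples the degree bound on $p$, the individual dimensions $a_i$, and the signs of the twists $p_i$; establishing it means tracking how the partial dimension $\sum_{i\in S}a_i$ can be squeezed against the degree bound while the twist sum stays negative, and it is here that the hypotheses must be exploited in full (and where I would check most carefully whether the stated degree bound really suffices).
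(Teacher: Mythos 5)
Your reduction is exactly the paper's route (K\"unneth, Theorem 2.9, fed with Theorem 2.10), and your bookkeeping of the surviving summands is in fact more careful than the paper's; but the step you flagged as the ``delicate heart'' is a genuine gap, and it cannot be closed, because the intermediate case really occurs: the lemma as stated is false. Concretely, take $X=\PP^{1}\times\PP^{3}$ (so $a_1=1$, $a_2=3$, $\dim X=4$), $p=1$, and $(p_1,p_2)=(-2,1)$. Then $p_1+p_2=-1<0$ and $0\le p<\dim X-1=3$, yet the K\"unneth summand with $S=\{1\}$ survives:
$H^{1}(X,\OO_X(-2,1))\supseteq H^{1}(\PP^{1},\OO_{\PP^{1}}(-2))\otimes H^{0}(\PP^{3},\OO_{\PP^{3}}(1))\ne 0$,
since $H^{1}(\PP^{1},\OO_{\PP^{1}}(-2))\cong H^{0}(\PP^{1},\OO_{\PP^{1}}(0))^{*}$ is one-dimensional by Theorem 2.10(c). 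So the subset condition you isolated ($p=\sum_{i\in S}a_i$, $p_i\le -a_i-1$ on $S$, $p_i\ge 0$ off $S$) is perfectly compatible with the standing hypotheses whenever a very negative twist on one factor is compensated by nonnegative twists elsewhere; your instinct to ``check whether the stated degree bound really suffices'' was exactly right, and the answer is that it does not.

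For comparison, the paper's own proof founders at precisely this point: after K\"unneth it argues that $\sum_i p_i<0$ forces some $H^{0}(\PP^{a_i},\OO_{\PP^{a_i}}(p_i))$ to vanish and that every summand ``will contain one of the vanishing factors'' --- but a summand placing $q_i=a_i$ at the negative twist replaces that $H^{0}$ by a top cohomology group, which is nonzero once $p_i\le -a_i-1$, and the subsequent sentence about summands with $1\le p_1$ and $p_i\le p-1$ does not repair this. A correct statement in this style needs stronger hypotheses: for instance, the argument is sound for $p=0$ (only $q_i=0$ occurs), and it is sound for all $p$ if one additionally assumes $p_i\ge -a_i$ for every $i$, since then all top-cohomology factors vanish, every surviving summand has all $q_i=0$, and $\sum_i p_i<0$ kills it. If you want to salvage the lemma, prove that corrected version and then verify, in each application in the paper (Lemma 3.3, Theorem 3.6, Lemma 4.3), that the twists actually arising satisfy such componentwise bounds; the blanket statement as given does not.
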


\begin{proof}
As a consequence of K\"{u}nneth's formula we have\\
$\displaystyle{H^p(X,\OO_X(p_1,\cdots,p_{n}) )\cong \bigoplus
H^{q_1}(\PP^{a_1},\OO_{\PP^{a_1}}(p_1))\otimes H^{q_2}(\PP^{a_2},\OO_{\PP^{a_2}}(p_2))\otimes\cdots\otimes
H^{q_{n}}(\PP^{a_n},\OO_{\PP^{a_n}}(p_{n}))}$.\\
Now if $p_1+\cdots+p_n<0$ then either $H^{0}(\PP^{a_1},\OO_{\PP^{a_1}}(p_1))=0$ or $H^{0}(\PP^{a_2},\OO_{\PP^{a_2}}(p_2))=0$ or $\ldots$ or 
$H^{0}(\PP^{a_n},\OO_{\PP^{a_n}}(p_n))=0$.\\
Thus $H^1(X,\OO_X(p_1,\cdots,p_{n}))=0$ since it will contain one of the vanishing factors above.\\
We have $\displaystyle{\sum_{i=1}^np_i<}0 \Longleftrightarrow \displaystyle{p_1+\sum_{i=2}^np_i<}0$ and so if we consider summands with
$1\leq p_1$, $p_i\leq p-1$ for $i=2,\cdots,n$ when $2\leq p\leq \dim(X)-1$.\\
We conclude that if $\displaystyle{\sum_{i=1}^np_i<}0$ then $H^p(X,\OO_X (p_1,\cdots,p_{n})) = 0$ for $0\leq p< \dim(X) -1$.
Now for any $k$ be a positive integer, since $\OO_X (p_1,\cdots,p_{n})^{\otimes k}=\OO_X (kp_1,\cdots,kp_{n})$ we get then desired result.

\end{proof}

\begin{lemma}
Let $A$ and $B$ be vector bundles canonically pulled back from $A'$ on $\PP^n$ and $B'$ on $\PP^m$ then\\
$\displaystyle{H^q(\bigwedge^s(A\oplus B))=
\sum_{k_1+\cdots+k_s=q}\big\{\bigoplus_{i=1}^{s}(\sum_{j=0}^s\sum_{m=0}^{k_i}H^m(\wedge^j(A))\otimes(H^{k_i-m}(\wedge^{s-j}(B)))) \big\}}$.
\end{lemma}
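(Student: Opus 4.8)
The plan is to reduce the statement to two ingredients already in hand: the multilinear-algebra decomposition of an exterior power of a direct sum, and the K\"unneth formula (Theorem~2.9). Throughout, the natural reading is that the ambient space is the product $X=\PP^n\times\PP^m$ with $p_1,p_2$ the two projections, $A=p_1^*A'$ and $B=p_2^*B'$. First I would record the identity
\[\bigwedge^s(A\oplus B)\;\cong\;\bigoplus_{j=0}^{s}\bigl(\bigwedge^{j}A\bigr)\otimes\bigl(\bigwedge^{s-j}B\bigr),\]
which holds for locally free sheaves because it can be verified on a local trivialisation and is functorial, hence globalises. (Equivalently, it is the degeneration of the sequence in Fact~2.8 applied to the split sequence $0\to A\to A\oplus B\to B\to 0$.)

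Next I would exploit the pullback hypothesis. Since exterior powers commute with pullback, $\bigwedge^{j}A=p_1^*\bigl(\bigwedge^{j}A'\bigr)$ and $\bigwedge^{s-j}B=p_2^*\bigl(\bigwedge^{s-j}B'\bigr)$, so each summand above is an external tensor product:
\[\bigl(\bigwedge^{j}A\bigr)\otimes\bigl(\bigwedge^{s-j}B\bigr)\;=\;\bigl(\bigwedge^{j}A'\bigr)\boxtimes\bigl(\bigwedge^{s-j}B'\bigr).\]
This is the precise form required to invoke K\"unneth, so identifying the tensor product of the two pullbacks with the box product is the pivotal (though routine) step.

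Then I would apply $H^q$, using that sheaf cohomology commutes with the finite direct sum, and feed each box-product summand into the K\"unneth formula to obtain
\[H^q\bigl(\bigwedge^{j}A'\boxtimes\bigwedge^{s-j}B'\bigr)\;\cong\;\bigoplus_{m=0}^{q}H^m\bigl(\PP^n,\bigwedge^{j}A'\bigr)\otimes H^{q-m}\bigl(\PP^m,\bigwedge^{s-j}B'\bigr).\]
Summing over $j=0,\dots,s$ and over the K\"unneth split $m+(q-m)=q$ assembles the full cohomology group; regrouping these summands and relabelling the cohomological degrees then reproduces the multi-indexed expression in the statement, where the outer constraint $k_1+\cdots+k_s=q$ is the bookkeeping that distributes the total degree $q$ across the K\"unneth factors.

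I do not expect a genuine obstacle here: the content is formal, resting entirely on the exterior-power decomposition and Theorem~2.9. The only points deserving explicit care are verifying the decomposition for locally free sheaves rather than merely for vector spaces, and confirming that the tensor product of the two pullbacks is exactly the external product $\boxtimes$ to which K\"unneth applies verbatim; both are standard, and once they are in place the final display is a matter of reorganising the index set.
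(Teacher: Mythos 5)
Your proposal is correct and follows essentially the same route as the paper, whose proof consists precisely of citing the three ingredients you use: the decomposition $\bigwedge^s(A\oplus B)\cong\bigoplus_{j}\bigwedge^{j}A\otimes\bigwedge^{s-j}B$, the K\"unneth formula for tensor products of pullbacks, and compatibility of cohomology with direct sums. If anything, your write-up is more careful than the paper's telegraphic list, since you make explicit the identification of $\bigwedge^{j}A\otimes\bigwedge^{s-j}B$ with the external product $\bigl(\bigwedge^{j}A'\bigr)\boxtimes\bigl(\bigwedge^{s-j}B'\bigr)$ that Theorem~2.9 actually requires.
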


\begin{proof}
The proof follows:
\begin{enumerate}
 \renewcommand{\theenumi}{\alph{enumi}}
  \item $\displaystyle{H^q(A_1\oplus\cdots\oplus A_s) = \bigoplus\left(H^{k_1}(A_1)\otimes H^{k_2}(A_2)\otimes\cdots \otimes H^{k_s}(A_s)\right)}$.\\
  \item $\displaystyle{H^q(A\otimes B)=\sum_{m=0}^qH^m(A)\otimes H^{q-m}(B)}$.\\
  \item $\displaystyle{\wedge^s(A\oplus B)=\sum_{j=0}^s\wedge^j(A)\otimes\wedge^{s-j}(B)}$.
\end{enumerate}

\end{proof}

\noindent The existence of monads on projective spaces was established by Fl\o{}ystad in \cite{3}, Main Theorem and generalized for a larger set of
projective varieties by Marchesi et al\cite{13}.

\begin{lemma}[\cite{13}, Theorem 2.4] Let $N\geq1$. There exists monads on $\PP^{N}$ whose maps are matrices of linear forms,
\[
\begin{CD}
0@>>>{\OO_{\PP^{N}}(-1)^{\oplus a}} @>>^{f}>{\OO^{\oplus b}_{\PP^{N}}} @>>^{g}>{\OO_{\PP^{N}}(1)^{\oplus c}} @>>>0\\
\end{CD}
\]
if and only if one of the following conditions holds\\
$(1) b\geq a+c$ and $b\geq 2c+N-1$ \\
$(2)b\geq a+c+N$.\\
If so there actually exists a monad with the map $f$ degenerating in expected codimension $b-a-c+1$.\\
If the cohomology of the monad is a vector bundle of rank less than $N$ then $N=2l+1$ is odd and the monad has the form
\[
\begin{CD}
0@>>>{\OO_{\PP^{2l+1}}(-1)^{\oplus a}} @>>^{f}>{\OO^{\oplus b}_{\PP^{2l+1}}} @>>^{g}>{\OO_{\PP^{2l+1}}(1)^{\oplus c}} @>>>0\\
\end{CD}
\] conversely for every $c,l\geq0$ there exists a monad as above whose cohomology is a vector bundle.
\end{lemma}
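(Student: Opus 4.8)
The plan is to treat a monad as the combination of its two structure maps and reduce each to a question about degeneracy loci on $\PP^N$. Writing the complex as $\OO_{\PP^N}(-1)^{\oplus a}\xrightarrow{f}\OO_{\PP^N}^{\oplus b}\xrightarrow{g}\OO_{\PP^N}(1)^{\oplus c}$, the defining relation $g\circ f=0$ together with exactness at the ends means precisely that $g$ is a bundle surjection (a surjection of sheaves onto the locally free $\OO_{\PP^N}(1)^{\oplus c}$ is automatically fibrewise surjective), that $K:=\ker g$ is then a vector bundle of rank $b-c$, and that $f$ factors as a generically injective map $\OO_{\PP^N}(-1)^{\oplus a}\to K$. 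The cohomology $E$ then has generic rank $r=b-a-c$, and $f$ fails to be a subbundle exactly on the degeneracy locus of $\OO_{\PP^N}(-1)^{\oplus a}\to K$, whose expected codimension is $(b-c)-a+1=b-a-c+1$; this already explains the ``expected codimension'' clause. So I would organise the proof around two questions: when a bundle surjection $g$ exists, and when its kernel $K$ admits a generically injective map from $\OO_{\PP^N}(-1)^{\oplus a}$.

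For necessity I would first note that $r\ge0$ forces $b\ge a+c$. A generic $c\times b$ matrix of linear forms fails to be surjective exactly on a determinantal locus of expected codimension $b-c+1$; since determinantal loci of expected codimension are nonempty whenever that codimension does not exceed $\dim\PP^N$, a bundle surjection $g$ can exist only if $b\ge c+N$. The finer threshold $b\ge 2c+N-1$ in condition~(1) comes not from $g$ alone but from requiring that $\Hom(\OO_{\PP^N}(-1)^{\oplus a},K)=H^0(K(1))^{\oplus a}$ contain a \emph{generically injective} element: twisting $0\to K\to\OO_{\PP^N}^{\oplus b}\to\OO_{\PP^N}(1)^{\oplus c}\to0$ by $\OO_{\PP^N}(1)$ and computing $H^0$ from the cohomology of line bundles on $\PP^N$, one sees that for $b$ below this threshold the only homomorphisms available drop rank on all of $\PP^N$, so no $f$ can be generically injective. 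Combining the rank bound, the surjectivity bound and this section count rules out every triple outside $(1)\cup(2)$.

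For sufficiency I would build the monad in the two regimes separately. In the regime of condition~(2) I pick $f$ first as a generic subbundle inclusion $\OO_{\PP^N}(-1)^{\oplus a}\hookrightarrow\OO_{\PP^N}^{\oplus b}$ (its degeneracy locus has codimension $b-a+1>N$, hence is empty), so that $C:=\cok f$ is a bundle of rank $b-a\ge c+N$, and then choose a generic surjection $C\onto\OO_{\PP^N}(1)^{\oplus c}$; composing with $\OO_{\PP^N}^{\oplus b}\onto C$ gives $g$ with $g\circ f=0$. In the regime of condition~(1) I instead fix a generic bundle surjection $g$ and use $b\ge 2c+N-1$ to guarantee, via the $H^0(K(1))$ computation above, that $K$ admits a generically injective $f$ from $\OO_{\PP^N}(-1)^{\oplus a}$ for every $a\le b-c$; a convenient way to exhibit the extremal cases is to assemble $f$ and $g$ from pulled-back Euler and Koszul sequences. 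I would then propagate to the remaining admissible triples by the standard operations of adjoining a trivial summand $\OO_{\PP^N}$ to the middle term (increasing $b$), deleting an $\OO_{\PP^N}(-1)$ from the source (decreasing $a$), or deleting an $\OO_{\PP^N}(1)$ from the target (decreasing $c$), each of which preserves injectivity of $f$, surjectivity of $g$ and the relation $g\circ f=0$; dualising the whole monad interchanges $a$ and $c$ and covers the symmetric situations.

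The hardest points are the generic-injectivity step in regime~(1) and the parity statement. For the latter, from $[E]=b[\OO_{\PP^N}]-a[\OO_{\PP^N}(-1)]-c[\OO_{\PP^N}(1)]$ in $K$-theory one obtains the total Chern class $c(E)=(1-h)^{-a}(1+h)^{-c}$ in $\ZZ[h]/(h^{N+1})$, where $h$ is the hyperplane class; if $E$ is locally free of rank $r=b-a-c<N$ then $c_i(E)=0$ for all $i>r$, so this power series must truncate to a polynomial of degree $\le r$ modulo $h^{N+1}$, and imposing the vanishing of the coefficients of $h^{r+1},\dots,h^{N}$ yields arithmetic relations on $a,c$ compatible only when $N$ is odd, say $N=2\ell+1$. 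Conversely, for each $c,\ell\ge0$ I would write down an explicit monad on $\PP^{2\ell+1}$ of the stated shape, generalising the Horrocks and Tango examples, and verify that its cohomology is locally free by checking directly that the two degeneracy loci are empty. I expect the delicate bookkeeping in the regime~(1) section count and in the parity computation to be the main obstacle, whereas the summand-propagation argument and the regime~(2) construction are routine.
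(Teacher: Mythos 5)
First, a point of reference: the paper itself contains no proof of this statement --- Lemma 2.13 is quoted from \cite{13}, Theorem 2.4, which in this linear form is Fl\o{}ystad's Main Theorem in \cite{3} --- so your attempt must be measured against that source. Your overall skeleton is the standard one and is sound: reduce to a fibrewise surjection $g$ with kernel bundle $K$ of rank $b-c$, regard $f$ as a sheaf-injective map $\OO_{\PP^N}(-1)^{\oplus a}\to K$ whose degeneracy locus has expected codimension $b-a-c+1$, obtain the parity clause by truncating $c(E)=(1-h)^{-a}(1+h)^{-c}$ at the rank, and prove the final converse by explicit matrices of instanton type (essentially the matrices $A$, $B$ of Construction 3.1 of this paper). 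The genuine gap is the necessity of $b\ge 2c+N-1$ when $b<a+c+N$, which is the heart of the theorem and which your ``section count'' cannot deliver. The twisted sequence $0\to K(1)\to\OO_{\PP^N}(1)^{\oplus b}\to\OO_{\PP^N}(2)^{\oplus c}\to 0$ determines only an Euler characteristic, whereas the assertion must hold for \emph{every} surjective $g$, not a generic one. Take $c=2$, $b=N+2$, $a=1$, which satisfies neither (1) nor (2): here $h^0(\OO_{\PP^N}(1)^{\oplus b})=(N+1)(N+2)=2\binom{N+2}{2}=h^0(\OO_{\PP^N}(2)^{\oplus 2})$, so the needed conclusion $H^0(K(1))=0$ (a nonzero section of $K(1)$ \emph{is} a sheaf-injective $f$, since $K$ is torsion-free) is exactly the claim that every everywhere-surjective $2\times(N+2)$ matrix of linear forms induces an injective map on global sections. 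That is a nontrivial statement about arbitrary, possibly very special, matrices; no dimension count proves it, and for $a\ge2$ the required generic independence of several sections is even further from a count. This is precisely where \cite{3} does its real work.

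Second, your use of duality is unsound. Dualizing a monad is legitimate only when $f$ is fibrewise injective; in the regime covered by condition (1) but not (2), i.e.\ $b<a+c+N$, the clause about degeneration in codimension $b-a-c+1\le N$ shows $f$ is in general not a subbundle map, and condition (1) is genuinely asymmetric in $a$ and $c$: the triple $(a,b,c)=(c_0+N-1,\,2c_0+N-1,\,c_0)$ is admissible under (1), while the mirror triple $(c_0,\,2c_0+N-1,\,c_0+N-1)$ satisfies neither (1) nor (2) for $N\ge2$, so no monad exists for it. Treating dualization as a symmetry ``covering the symmetric situations'' would therefore manufacture monads for inadmissible triples. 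Two smaller repairs: your derivation of $b\ge c+N$ argues from generic matrices, but necessity needs nonemptiness of the degeneracy locus for \emph{arbitrary} $g$; this is true, but by Fulton--Lazarsfeld nonemptiness (applicable because the relevant twist is by the ample $\OO_{\PP^N}(1)$), not by genericity. And the parity step is left as an assertion: one must actually verify that the coefficients of $h^{r+1},\dots,h^{N}$ in $(1-h)^{-a}(1+h)^{-c}$ cannot all vanish when $N$ is even, and the same computation should be pushed further to yield Fl\o{}ystad's precise normal form $a=c$, $b=2l+2c$ (this paper's transcription of the lemma has silently dropped that normalization). In sum: correct architecture, but the two load-bearing steps --- the lower bound $b\ge2c+N-1$ for arbitrary maps, and the extremal regime-(1) construction, which you leave at ``Euler and Koszul'' --- are missing.
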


\section{Monads on a Cartesian product of odd dimension projective spaces}

\noindent The goal of this section is to construct monads on a Cartesian product of odd projective spaces $(\PP^1)^{l_1}\times(\PP^3)^{l_2}\times\cdots\times(\PP^{2n+1})^{l_m}$.
More specifically we generalize the results of Maingi \cite{12} by varying the ambient space. 
We prove that the kernel bundle is stable and thereafter we prove that the cohomology bundle $E$ associated to the monad on $X$ is simple.

\subsection{Monad construction via morphisms}

The following construction is a generalization of Construction 3.1 see \cite{12}.

\begin{construct}
Let $\psi : X = (\PP^1)^{l_1}\times(\PP^3)^{l_2}\times(\PP^5)^{l_3}\times\cdots\times(\PP^{2n+1})^{l_m}\longrightarrow \PP^{N=2n+1}$ be the Segre embedding which is defined as follows:\\
\\
$\psi([\alpha^1_{10}:\alpha^1_{11}]\cdots[\alpha^1_{l_10}:\alpha^1_{l_11}]
[\alpha^2_{10}:\alpha^2_{11}:\alpha^2_{12}:\alpha^2_{13}]\cdots[\alpha^2_{l_20}:\alpha^2_{l_21}:\alpha^2_{l_22}:\alpha^2_{l_23}]
[\alpha^3_{10}:\alpha^3_{11}:\alpha^3_{12}:\alpha^3_{13}:\alpha^3_{14}:\alpha^3_{15}]\cdots[\alpha^3_{l_30}:\alpha^3_{l_31}:\alpha^3_{l_32}:\alpha^3_{l_33}:\alpha^3_{l_34}:\alpha^3_{l_35}]\cdots
[\alpha^m_{10}:\alpha^m_{11}:\cdots:\alpha^m_{1(2n+1)}]\cdots[\alpha^m_{l_m0}:\alpha^m_{l_m1}:\cdots:\alpha^m_{l_m(2n+1)}])$
\[
\begin{CD}
@.@V^{injects}VV\\
@.[x_0:x_1:\cdots:x_{\nu}:y_0:y_2:\ldots:y_{\nu}]
\end{CD}\]
\\
First note that since we are taking $l_1,l_2,l_3,\cdots l_m$ copies of $\PP^1,\PP^3,\PP^5,\cdots,\PP^{2n+1}$ then we have \\
$N = 2^{l_1}\cdot4^{l_2}\cdot6^{l_3}\cdots(2n+2)^{l_m}-1$\\
$= 2^{l_1}\cdot4^{l_2}\cdot6^{l_3}\cdots(2n+2)^{l_m}-2+1$\\
$= 2\{2^{l_1-1}\cdot4^{l_2}\cdot6^{l_3}\cdots(2n+2)^{l_m}-1\}+1$\\
$=2{\nu}+1$\\
i.e. $N=2{\nu}+1$ where ${\nu}=\{2^{l_1-1}\cdot4^{l_2}\cdot6^{l_3}\cdots(2n+2)^{l_m}-1\}$.\\
 There exists a  linear monad of the form
\[
\begin{CD}
0@>>>\OO_{\PP^{2{\nu}+1}}(-1)^{\oplus k} @>>^{{A}}>{\OO^{\oplus2{\nu}+2k}_{\PP^{2{\nu}+1}}} @>>^{{B}}>\OO_{\PP^{2{\nu}+1}}(1)^{\oplus k} @>>>0\\
\end{CD}
\]
whose morphisms $A$ and $B$ that establish the monad are as given below
\[ B :=\left( \begin{array}{cccc|cccccccc}
x_0\cdots  & x_{\nu} &       &   &y_0 \cdots  & y_{\nu}\\
    &\ddots&\ddots &&\ddots&\ddots\\
    && x_0\cdots   x_{\nu} & & & y_0 \cdots  & y_{\nu}
\end{array} \right)
\]
 and
\[ A :=\left( \begin{array}{cccccccc}
-y_0\cdots  & -y_{\nu} \\
    	   &\ddots &\ddots\\
             &&-y_0 \cdots & -y_{\nu}\\
\hline
x_0 \cdots  & x_{\nu} \\
    	    &\ddots &\ddots\\
             && x_0\cdots & x_{\nu}\\
\end{array} \right)
\]
\\
We induce a monad on $X = (\PP^1)^{l_1}\times(\PP^3)^{l_2}\times(\PP^5)^{l_3}\times\cdots\times(\PP^{2n+1})^{l_m}$%
 \[\begin{CD}
M_\bullet: 0@>>>\OO_{X}(-1,\cdots,-1)^{\oplus{k}}@>>^{\overline{A}}>{\OO^{\oplus{2n}\oplus{2k}}_X} @>>^{\overline{B}}>\OO_{X}(1,\cdots,1)^{\oplus{k}}  @>>>0\\
\end{CD}\]
by giving the morphisms $\overline{A}$ and $\overline{B}$ with $\overline{B}\cdot\overline{A}=0$ and $\overline{A}$ and $\overline{B}$ are of maximal rank.
\\

From $A$ and $B$ whose entries are $x_0,\cdots,x_{\nu},y_0,\cdots,y_{\nu}$ the homogeneous coordinates on $\PP^{2{\nu}+1}$ we give the correspondence
for the the Segre embedding using the following table:\\
\[ \begin{array}{|c|c|}
\hline
homog. coord. ~~on ~~\PP^{2n+1} & representation ~homog. coord.~~ on ~~X\\
\hline
x_0 & \alpha^1_{10}\cdots\alpha^1_{l_10}\alpha^2_{10}\cdots\alpha^2_{l_20}\alpha^3_{10}\cdots\alpha^3_{l_30}\cdots\alpha^m_{10}\cdots\alpha^m_{l_m0} \\
x_1 & \alpha^1_{10}\cdots\alpha^1_{l_10}\alpha^2_{10}\cdots\alpha^2_{l_20}\alpha^3_{10}\cdots\alpha^3_{l_30}\cdots\alpha^m_{10}\cdots\alpha^m_{l_m1} \\
x_2 & \alpha^1_{10}\cdots\alpha^1_{l_10}\alpha^2_{10}\cdots\alpha^2_{l_20}\alpha^3_{10}\cdots\alpha^3_{l_30}\cdots\alpha^m_{10}\cdots\alpha^m_{l_m2} \\
\vdots&\vdots\\
x_{\nu-1} &  \alpha^1_{10}\cdots\alpha^1_{l_10}\alpha^2_{10}\cdots\alpha^2_{l_20}\alpha^3_{10}\cdots\alpha^3_{l_30}\cdots\alpha^m_{10}\cdots\alpha^m_{l_mn-1} \\
x_\nu &  \alpha^1_{10}\cdots\alpha^1_{l_10}\alpha^2_{10}\cdots\alpha^2_{l_20}\alpha^3_{10}\cdots\alpha^3_{l_30}\cdots\alpha^m_{10}\cdots\alpha^m_{l_mn} \\
y_0 &  \alpha^1_{10}\cdots\alpha^1_{l_10}\alpha^2_{10}\cdots\alpha^2_{l_20}\alpha^3_{10}\cdots\alpha^3_{l_30}\cdots\alpha^m_{10}\cdots\alpha^m_{l_mn+1} \\
\vdots&\vdots\\
y_{\nu-1} & \alpha^1_{11}\cdots\alpha^1_{l_11}\alpha^2_{13}\cdots\alpha^2_{l_23}\alpha^3_{15}\cdots\alpha^3_{l_35}\cdots\alpha^m_{1(2n+1)}\cdots\alpha^m_{l_m(2n)} \\
y_\nu &  \alpha^1_{11}\cdots\alpha^1_{l_11}\alpha^2_{13}\cdots\alpha^2_{l_23}\alpha^3_{15}\cdots\alpha^3_{l_35}\cdots\alpha^m_{1(2n+1)}\cdots\alpha^m_{l_m(2n+1)} \\
\hline
\end{array} 
\]
In following table we give a representation for the coordinates of $X$;
\[ \begin{array}{|c|c|}
\hline
homog. coord. ~~on ~~\PP^{2n+1} & homog. coord.~~ on ~~X\\
\hline
\rho_{10\cdots{l_1}010\cdots{l_2}010\cdots{l_3}0\cdots10\cdots{l_m}0} & \alpha^1_{10}\cdots\alpha^1_{l_10}\alpha^2_{10}\cdots\alpha^2_{l_20}\alpha^3_{10}\cdots\alpha^3_{l_30}\cdots\alpha^m_{10}\cdots\alpha^m_{l_m0} \\
\rho_{10\cdots{l_1}010\cdots{l_2}010\cdots{l_3}0\cdots10\cdots{l_m}1} & \alpha^1_{11}\cdots\alpha^1_{l_10}\alpha^2_{10}\cdots\alpha^2_{l_20}\alpha^3_{10}\cdots\alpha^3_{l_30}\cdots\alpha^m_{10}\cdots\alpha^m_{l_m1} \\
\rho_{10\cdots{l_1}010\cdots{l_2}010\cdots{l_3}0\cdots10\cdots{l_m}2} & \alpha^1_{12}\cdots\alpha^1_{l_10}\alpha^2_{10}\cdots\alpha^2_{l_20}\alpha^3_{10}\cdots\alpha^3_{l_30}\cdots\alpha^m_{10}\cdots\alpha^m_{l_m2} \\
\vdots&\vdots\\
\rho_{10\cdots{l_1}010\cdots{l_2}010\cdots{l_3}0\cdots10\cdots{l_m}n} & \alpha^1_{12}\cdots\alpha^1_{l_10}\alpha^2_{10}\cdots\alpha^2_{l_20}\alpha^3_{10}\cdots\alpha^3_{l_30}\cdots\alpha^m_{10}\cdots\alpha^m_{l_mn} \\
\rho_{10\cdots{l_1}010\cdots{l_2}010\cdots{l_3}0\cdots10\cdots{l_m}n+1} & \alpha^1_{12}\cdots\alpha^1_{l_10}\alpha^2_{10}\cdots\alpha^2_{l_20}\alpha^3_{10}\cdots\alpha^3_{l_30}\cdots\alpha^m_{10}\cdots\alpha^m_{l_mn+1} \\
\vdots&\vdots\\
\rho_{11\cdots{l_1}113\cdots{l_2}315\cdots{l_3}5\cdots1(2n+1)\cdots{l_m}(2n)} & \alpha^1_{11}\cdots\alpha^1_{l_11}\alpha^2_{13}\cdots\alpha^2_{l_23}\alpha^3_{15}\cdots\alpha^3_{l_35}\cdots\alpha^m_{1(2n+1)}\cdots\alpha^m_{l_m(2n)} \\
\rho_{11\cdots{l_1}113\cdots{l_2}315\cdots{l_3}5\cdots1(2n+1)\cdots{l_m}(2n+1)} & \alpha^1_{11}\cdots\alpha^1_{l_11}\alpha^2_{13}\cdots\alpha^2_{l_23}\alpha^3_{15}\cdots\alpha^3_{l_35}\cdots\alpha^m_{1(2n+1)}\cdots\alpha^m_{l_m(2n+1)} \\
\hline
\end{array} 
\]

Specifically we define two matrices $\overline{A}$ and $\overline{B}$ as follows\\
\[ \overline{B} =\left( \begin{array}{c|c}
B_1 & B_2
         \end{array} \right)\] and

\[ \overline{A}=\left( \begin{array}{cc}
A_1 \\ A_2
         \end{array} \right)\]
Where 
\[ \overline{B_1} :=\left[ \begin{array}{ccccccc}
\rho_{10\cdots{l_1}010\cdots{l_2}010\cdots{l_3}0\cdots10\cdots{l_m}0}\cdots  & \rho_{10\cdots{l_1}010\cdots{l_2}010\cdots{l_3}0\cdots10\cdots{l_m}n}  \\
    \ddots&\ddots \\
    & \rho_{10\cdots{l_1}010\cdots{l_2}010\cdots{l_3}0\cdots10\cdots{l_m}0}~~~ \cdots~~~  \rho_{10\cdots{l_1}010\cdots{l_2}010\cdots{l_3}0\cdots10\cdots{l_m}n} 
\end{array} \right]
\]
\\
\[ \overline{B_2} :=\left[ \begin{array}{ccccccc}
\rho_{10\cdots{l_1}010\cdots{l_2}010\cdots{l_3}0\cdots{l_m}n+1} \cdots  & \rho_{11\cdots{l_1}113\cdots{l_2}315\cdots{l_3}5\cdots{l_m}(2n+1)}\\
    \ddots&\ddots \\
    & \rho_{10\cdots{l_1}010\cdots{l_2}010\cdots{l_3}0\cdots{l_m}n+1}\cdots  & \rho_{11\cdots{l_1}113\cdots{l_2}315\cdots{l_3}5\cdots{l_m}(2n+1)}
\end{array} \right]
\]

and

\[ \overline{A_1} :=\left[ \begin{array}{ccccccc}
\rho_{10\cdots{l_1}010\cdots{l_2}010\cdots{l_3}0\cdots{l_m}n+1} \cdots  & \rho_{11\cdots{l_1}113\cdots{l_2}315\cdots{l_3}5\cdots{l_m}(2n+1)}\\
    \ddots&\ddots \\
    & \rho_{10\cdots{l_1}010\cdots{l_2}010\cdot{l_3}0\cdots{l_m}n+1}\cdots  & \rho_{11\cdots{l_1}113\cdots{l_2}315\cdots{l_3}5\cdots{l_m}(2n+1)}
\end{array} \right]
\]
\\
\[ \overline{A_2} :=\left[ \begin{array}{ccccccc}
\rho_{10\cdots{l_1}010\cdots{l_2}010\cdots{l_3}0\cdots10\cdots{l_m}0}\cdots  & \rho_{10\cdots{l_1}010\cdots{l_2}010\cdots{l_3}0\cdots10\cdots{l_m}n}  \\
    \ddots&\ddots \\
    & \rho_{10\cdots{l_1}010\cdots{l_2}010\cdots{l_3}0\cdots10\cdots{l_m}0}\cdots  \rho_{10\cdots{l_1}010\cdots{l_2}010\cdots{l_3}0\cdots10\cdots{l_m}n} 
\end{array} \right]
\]

We note that
\begin{enumerate}
 \item $\overline{B}\cdot \overline{A} = 0$ and
 \\
 \item  The matrices $\overline{B}$ and $\overline{A}$ have maximal rank\\
\end{enumerate}
Hence we get the desired monad,
\[\begin{CD}
M_\bullet: 0@>>>\OO_{X}(-1,\cdots,-1)^{\oplus{k}}@>>^{\overline{A}}>{\OO^{\oplus{2n}\oplus{2k}}_X} @>>^{\overline{B}}>\OO_{X}(1,\cdots,1)^{\oplus{k}}  @>>>0\\
\end{CD}\]
\end{construct}

\begin{theorem}
 Let $X = (\PP^1)^{l_1}\times(\PP^3)^{l_2}\times\cdots\times(\PP^{2n+1})^{l_m}$,  be a Cartesian product of $l_1$ copies 
of $\PP^1$, $l_2$ copies of $\PP^3$ $\cdots$ and $l_m$ copies of $\PP^{2n+1}$. There exists a monad of the form
 \[
\begin{CD}
M_\bullet: 0@>>>\OO_{X}(-1,\cdots,-1)^{\oplus{k}}@>>^{\overline{A}}>{\OO^{\oplus{2\nu}\oplus{2k}}_X} @>>^{\overline{B}}>\OO_{X}(1,\cdots,1)^{\oplus{k}}  @>>>0\\
\end{CD}
\]
where $l_1,\cdots,l_m,\nu,k$ are positive integers and $\nu=2^{l_1-1}4^{l_2}\cdots(2n+2)^{l_m}-1$.
\end{theorem}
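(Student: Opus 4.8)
The plan is to obtain the monad on $X$ by first producing it on the single projective space $\PP^{N}$ into which $X$ Segre‑embeds, and then transporting it to $X$ by pullback along $\psi$. Since the target of $\psi$ is an honest projective space, the existence of a suitable linear monad there is exactly the content of the existence lemma of Marchesi et al.\ (\cite{13}, Theorem~2.4), so the real work splits into (i) a dimension count identifying $N$ with $2\nu+1$, (ii) checking the numerical hypotheses of that lemma, and (iii) verifying that the pullback of a monad along $\psi$ is again a monad.

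First I would settle the numerics. The Segre embedding of $\prod_i\PP^{n_i}$ lands in $\PP^{\prod_i(n_i+1)-1}$; here the factors are $l_1$ copies of $\PP^1$, $l_2$ copies of $\PP^3$, up to $l_m$ copies of $\PP^{2n+1}$, so $\prod_i(n_i+1)=2^{l_1}4^{l_2}\cdots(2n+2)^{l_m}$ and hence $N=2^{l_1}4^{l_2}\cdots(2n+2)^{l_m}-1$. Factoring out a $2$, exactly as in the Construction, rewrites this as $N=2\nu+1$ with $\nu=2^{l_1-1}4^{l_2}\cdots(2n+2)^{l_m}-1$, a positive integer. Next I would invoke the existence lemma on $\PP^{N}=\PP^{2\nu+1}$ with $a=c=k$ and $b=2\nu+2k$. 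One must verify one of its numerical alternatives: condition~$(1)$ asks $b\geq a+c$, i.e.\ $2\nu+2k\geq 2k$, and $b\geq 2c+N-1$, i.e.\ $2\nu+2k\geq 2k+2\nu$, both of which hold (the second with equality). This yields a linear monad $0\to\OO_{\PP^{2\nu+1}}(-1)^{\oplus k}\xrightarrow{A}\OO_{\PP^{2\nu+1}}^{\oplus 2\nu+2k}\xrightarrow{B}\OO_{\PP^{2\nu+1}}(1)^{\oplus k}\to 0$, realized by the explicit matrices of linear forms in the Construction, with $BA=0$ and $A,B$ of maximal rank. Moreover, with these data the locus where $A$ could drop rank has codimension $b-a+1=2\nu+k+1>N$ and likewise for $B$, so both loci are empty: $A$ is a subbundle inclusion and $B$ a surjection of bundles at every point of $\PP^{N}$.

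Finally I would transport this to $X$. Because $\psi^{*}\OO_{\PP^{N}}(\pm1)=\OO_X(\pm1,\dots,\pm1)$, applying $\psi^{*}$ to the monad on $\PP^{N}$ produces a complex of the stated shape on $X$, whose maps $\overline{A}=\psi^{*}A$ and $\overline{B}=\psi^{*}B$ are obtained by substituting the Segre coordinates (the monomials $\alpha^{\bullet}_{\bullet\bullet}$) for $x_i,y_j$ — precisely the block matrices $\overline{A}$ (stacked from $\overline{A_1},\overline{A_2}$) and $\overline{B}=(\overline{B_1}\mid\overline{B_2})$ written down in the Construction. The relation $\overline{B}\,\overline{A}=\psi^{*}(BA)=0$ is immediate. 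The step I expect to be the main obstacle is maximal rank: a monad requires $\overline{A}$ to remain a subbundle inclusion and $\overline{B}$ a surjection. I would argue that pullback of locally free sheaves is exact and sends a fibrewise‑injective (resp.\ fibrewise‑surjective) bundle map to a fibrewise‑injective (resp.\ fibrewise‑surjective) one; since $A$ and $B$ enjoy these properties at \emph{every} point of $\PP^{N}$ by the codimension computation above, so do $\overline{A}$ and $\overline{B}$ at every point of $X$. (Equivalently, one checks directly on the explicit block matrices that their degeneracy loci are empty, so that $\overline{A}$ and $\overline{B}$ have maximal rank everywhere.) This verifies the three defining conditions and yields the desired monad $M_\bullet$.
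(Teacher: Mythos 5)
Your proposal is correct and follows essentially the same route as the paper: the Segre identification $N=2\nu+1$, the explicit matrices of Construction 3.1 transported to $X$, and the numerical existence criterion of Marchesi--Marques--Soares (the paper's Lemma 2.13). If anything your write-up is tighter than the paper's proof, which verifies condition (2) of that lemma with $N$ conflated with $\dim X$ (and with $\nu$) and leaves the pullback step implicit in the construction, whereas you check condition (1) on $\PP^{2\nu+1}$ itself, where the lemma actually applies (noting the equality $b=2c+N-1$), and you explicitly argue that pointwise maximal rank of $A$ and $B$ survives pullback along $\psi$ --- with the one caveat that your expected-codimension count alone would not force the degeneracy loci to be empty for these particular matrices, which is why your fallback direct check on the explicit blocks is the step that should carry the weight.
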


\begin{proof} 
We have $a=c=k$, $b=2\nu+2k$ and $\nu=\dim X$,  $X = (\PP^1)^{l_1}\times(\PP^3)^{l_2}\times\cdots\times(\PP^{2n+1})^{l_m}$.\\
We show that conditions for Lemma 2.13 hold i.e. $b\geq a+c+N$, here $N=\nu$.\\
Now 
\begin{align*}
\begin{split}
b & = 2k+2\nu\\
  & = 2k+ 2[2^{l_1-1}4^{l_2}\cdots(2n+2)^{l_m}-1]\\
  & = 2k+ 2^{l_1}4^{l_2}\cdots(2n+2)^{l_m}-2\\
  & > 2k+ l_1+3l_2+5l_3+\cdots+(2n+1)l_m\\
  & = 2k + \dim X\\
  & = a+c+\nu
\end{split}
\end{align*}
Thus $b= 2k+2\nu\geq 2k + \dim X = a+c+\nu$, thus condition 2 holds.\\
Since $b>a+c+\nu>a+c+\nu-1$ and $b=2k+2\nu\geq 2k = a+c$ then Condition 1 holds.\\
From the above construction we get the explicit morphisms $\overline{A}$ and $\overline{B}$ and since the conditions of Lemma 2.13 hold the monad exists.

\end{proof}

\begin{lemma}
Let $T$ be a vector bundle on $X = (\PP^1)^{l_1}\times(\PP^3)^{l_2}\times\cdots\times(\PP^{2n+1})^{l_m}$ defined by the short exact sequence
\[\begin{CD}0@>>>T @>>>\OO_X^{2\nu+2k}@>>^{\overline{B}}>\OO_X(1,\cdots,1)^{\oplus k} @>>>0\end{CD}\]
then $T$ is stable for an ample line bundle $\mathscr{L} = \OO_X(1,\cdots,1)$
\end{lemma}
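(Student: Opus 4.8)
The plan is to derive stability of $T$ from the Generalized Hoppe Criterion in the normalized form of Lemma 2.7. First I would record the numerical invariants: from the defining sequence $\rk(T)=2\nu+k$, and since $c_1(\OO_X^{2\nu+2k})=0$ one gets $c_1(T)=-k\,c_1(\mathscr L)$, whence $\deg_{\mathscr L}(T)=-k\,\mathscr L^{\dim X}<0$ and the slope $\nu_{\mathscr L}(T)$ is negative. Consequently, for each $q$ the normalization constant $k_{\wedge^q T}=\lceil \nu_{\mathscr L}(\bigwedge^q T)/d\rceil$ is nonpositive, and I would pin down $(\bigwedge^q T)_{\mathscr L\text{-norm}}$ explicitly so that the twist below is unambiguous. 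With $l=\sum_{i=1}^m l_i$ the Picard number, Lemma 2.7 reduces the claim to showing
\[ H^0\bigl(X,(\textstyle\bigwedge^q T)_{\mathscr L\text{-norm}}(p_1,\dots,p_l)\bigr)=0 \]
for all $1\le q\le \rk(T)-1$ and all $B=\OO_X(p_1,\dots,p_l)$ with $\delta_{\mathscr L}(B)\le 0$.

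To access $\bigwedge^q T$ I would feed the defining sequence into Fact 2.8, obtaining the long exact sequence
\[ 0\to \textstyle\bigwedge^q T\to \bigwedge^q\OO_X^{2\nu+2k}\to \bigwedge^{q-1}\OO_X^{2\nu+2k}\otimes\OO_X(1,\dots,1)^{\oplus k}\to\cdots\to S^q\!\bigl(\OO_X(1,\dots,1)^{\oplus k}\bigr)\to 0. \]
Since $\OO_X^{2\nu+2k}$ is trivial, the $j$-th term is $\OO_X(j,\dots,j)^{\oplus m_j}$ for suitable multiplicities $m_j$. Writing $\mathcal N:=\OO_X(p_1-k_{\wedge^q T},p_2,\dots,p_l)$ for the total twist contributed by the normalization and by $B$, tensoring with $\mathcal N$ replaces the $j$-th term by $E_j:=\OO_X(j+p_1-k_{\wedge^q T},\,j+p_2,\dots,j+p_l)^{\oplus m_j}$ and keeps the sequence exact, as $\mathcal N$ is locally free.

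Because Fact 2.8 presents $\bigwedge^q T$ as the kernel of the first differential, after twisting $\bigwedge^q T\otimes\mathcal N=\ker(E_0\to E_1)$, and left exactness of $H^0$ gives $H^0\bigl(\bigwedge^q T\otimes\mathcal N\bigr)=\ker\!\bigl(H^0(E_0)\to H^0(E_1)\bigr)$, the map being induced by $\overline B$; so only two terms matter. When $\mathcal N$ has a negative coordinate, Künneth (Theorem 2.9) together with Theorem 2.10 gives $H^0(E_0)=0$ and the kernel vanishes. A short analysis shows that, under $\delta_{\mathscr L}(B)\le 0$, the configurations in which $E_0$ instead carries global sections form only a bounded (for $k_{\wedge^q T}=0$, a single: $B=0$) family, concentrated at the boundary of the slope condition; there the vanishing is not automatic and must come from injectivity of $H^0(E_0)\to H^0(E_1)$.

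I expect this boundary injectivity, rather than the bulk line-bundle vanishings, to be the main obstacle. It amounts to showing that a global multivector annihilated by the Koszul-type differential $\bigwedge^{q-1}(\mathrm{incl})\otimes\overline B$ must vanish, and I would reduce it to the explicit maximal-rank structure of $\overline B$ exhibited in Construction 3.1, whose rows are the linearly independent coordinate forms $\rho_{\cdots}$. Already for $q=1$ this says precisely that a constant vector killed by $\overline B$ is zero, and I would propagate that linear independence through the exterior power to cover $1\le q\le\rk(T)-1$. Assembling the negative-coordinate case and the boundary case over all admissible $B$ then furnishes the hypotheses of Lemma 2.7 and yields the $\mathscr L$-stability of $T$.
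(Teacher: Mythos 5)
Your proposal follows the same skeleton as the paper's own proof: compute $c_1(T)=-k\,c_1(\mathscr{L})$ from the defining sequence so that $\deg_{\mathscr{L}}T<0$, reduce stability to $H^0$-vanishing via the normalized Hoppe criterion (Lemma 2.7), resolve $\bigwedge^q T$ by the exterior-power sequence of Fact 2.8, use left-exactness of $H^0$ to trap $H^0(\bigwedge^q T\otimes\mathcal{N})$ inside $H^0$ of a direct sum of line bundles, and kill the latter with K\"unneth vanishing (Lemma 2.11). In the regime where the twist has negative degree in each factor this is word for word the paper's argument (the paper twists first and then takes exterior powers, which is equivalent), so the bulk of your plan is correct and matches the source.

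The gap is precisely at what you yourself flag as ``the main obstacle'': the boundary injectivity of $H^0(E_0)\to H^0(E_1)$ is announced but never proved. For $q=1$ and $B=0$ your reduction to ``a constant vector annihilated by $\overline{B}$ is zero'' does work, since the entries of each row of $\overline{B}$ are linearly independent Segre coordinates. But for larger $q$ the normalization constant $k_{\bigwedge^q T}=\lceil q\,\nu_{\mathscr{L}}(T)/d\rceil$ is genuinely negative as soon as $qk\,\mathscr{L}^{\dim X}/(2\nu+k)\geq d$ (which happens for $q$ near $\rk(T)-1$ once $k$ is large), so $(\bigwedge^q T)_{\mathscr{L}\text{-norm}}$ is a \emph{positive} twist of $\bigwedge^q T$, the problematic $B$ are no longer the single point $B=0$, and you must show that global sections of $\bigwedge^q\OO_X^{\oplus 2\nu+2k}$ twisted by $\OO_X(t,p_2,\dots,p_l)$ with $t>0$ that are killed by the Koszul-type differential vanish. ``Propagating the linear independence through the exterior power'' is a substantive multilinear claim that you have not established, and it is not even clear a priori that the Hoppe hypotheses hold in that range, since the criterion is sufficient rather than necessary. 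You should be aware, however, that the paper's own proof does not close this either: it infers $(\bigwedge^q T)_{\mathscr{L}\text{-norm}}=\bigwedge^q T$ from $\deg_{\mathscr{L}}T<0$ alone (false when $k_{\bigwedge^q T}<0$) and verifies vanishing only for twists whose block-sums are strictly negative, which does not exhaust all $B$ with $\delta_{\mathscr{L}}(B)\leq 0$ demanded by Lemma 2.7. So your attempt correctly identifies, without resolving, a lacuna that the published argument passes over silently; to count as a complete proof it would need the boundary injectivity carried out for all $1\leq q\leq\rk(T)-1$, not just $q=1$.
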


\begin{proof}
We show $H^0(X,(\bigwedge^q T)_{{\mathscr{L}}-norm}(g_{11},\cdots,g_{1l_1},g_{21}\cdots,g_{2l_2},\cdots g_{m1},\cdots,g_{ml_m})) = 0$ for all
$\displaystyle{\sum_{i=1}^{l_1}g_{1i}<0}$, $\displaystyle{\sum_{j=1}^{l_2}g_{2j}<0}$,$\cdots$, $\displaystyle{\sum_{k=1}^{l_m}g_{mk}<0}$ and $1\leq q\leq \rk(T)-1$.\\
\\
Consider the ample line bundle $\mathscr{L} = \OO_X(1,\cdots,1) = \OO(L)$. \\
From the short exact sequence \[\begin{CD}0@>>>T @>>>\OO_X^{2\nu+2k}@>>^{\overline{B}}>\OO_X(1,\cdots,1)^{\oplus k} @>>>0\end{CD}\] we get
\[c_1(T) = (-k,\cdots,-k)=-k(1,\cdots,1)\]
Since $L^{l}>0$, $\displaystyle{l=\sum_{i=1}^ml_m}$ then degree of $T$ is given by
$\deg_{\mathscr{L}}T=c_1(T)\cdot\mathscr{L}^{d-1}$\\
$\displaystyle{= -k(1,\cdots,1)\OO_X(g_{11}h_{11}+\cdots+g_{ml_m}h_{ml_m})^{l-1}}<0$\\ 

Since $\deg_{\mathscr{L}}T<0$, then $(\bigwedge^q T)_{\mathscr{L}-norm} = (\bigwedge^q T)$ and  it suffices by  Proposition 2.6, 
to prove that $h^0(\bigwedge^q T(g_{11},\cdots,g_{1l_1},g_{21},\cdots,g_{2l_2},\cdots,g_{m1},\cdots,g_{ml_m})) = 0$ with 
$\displaystyle{\sum_{i=1}^{l_1}g_{1i}<}0$, $\displaystyle{\sum_{j=1}^{l_2}g_{2j}<}0$,$\cdots$, $\displaystyle{\sum_{k=1}^{l_m}g_{mk}<}0$ for all $1\leq q\leq \rk(T)-1$.\\
\\
Next we twist the exact sequence 
\[\begin{CD}
0@>>>T @>>>{\OO_X^{2\nu+2k}} @>>>\OO_X(1,\cdots,1)^{\oplus k} @>>>0
\end{CD}\]
by $\OO_X (g_{11},\cdots,g_{1l_1},g_{21},\cdots,g_{2l_2},g_{m1},\cdots,g_{mk})$ we get the sequence,\\
\\
$0\lra T(g_{11},\cdots,g_{1l_1},g_{21},\cdots,g_{2l_2},g_{m1},\cdots,g_{mk})\lra\\
\lra\OO_X(g_{11},\cdots,g_{1l_1},g_{21},\cdots,g_{2l_2},g_{m1},\cdots,g_{mk})^{\oplus{2\nu+2k}}\lra\\
\lra\OO_X(1+g_{11},\cdots,1+g_{1l_1},1+g_{21},\cdots,1+g_{2l_2},1+g_{m1},\cdots,1+g_{mk})^{\oplus k}\lra0$\\
\\
and taking the exterior powers of the sequence by Proposition 2.7 we obtain\\
\\
$0\lra \bigwedge^q T(g_{11},\cdots,g_{1l_1},g_{21},\cdots,g_{2l_2},g_{m1},\cdots,g_{mk}) \lra\\
\lra \bigwedge^q (\OO_X(g_{11},\cdots,g_{1l_1},g_{21},\cdots,g_{2l_2},g_{m1},\cdots,g_{mk})^{\oplus{2\nu+2k}})\lra\\
\lra \bigwedge^{q-1}(\OO_X(1+2g_{11},\cdots,1+2g_{1l_1},1+2g_{21},\cdots,1+2g_{2l_2},1+2g_{m1},\cdots,1+2g_{mk})^{\oplus k})\cdots$\\
\\
Taking cohomology we have the injection:\\
$0\lra H^0(\bigwedge^{q}T(g_{11},\cdots,g_{1l_1},g_{21},\cdots,g_{2l_2},g_{m1},\cdots,g_{mk}))\\
\hookrightarrow H^0(\bigwedge^{q}(\OO_X(g_{11},\cdots,g_{1l_1},g_{21},\cdots,g_{2l_2},g_{m1},\cdots,g_{mk}))^{\oplus k}$\\
\\%
since $\displaystyle{\sum_{i=1}^{l_1}g_{1i}<}0$, $\displaystyle{\sum_{j=1}^{l_2}g_{2j}<}0$,$\cdots$, $\displaystyle{\sum_{k=1}^{l_m}g_{mk}<}0$ using Lemma 2.11 varying
the ambient space from $\PP^{a_1}\times\cdots\times\PP^{a_n}$ to $(\PP^1)^{l_1}\times(\PP^3)^{l_2}\times\cdots\times(\PP^{2n+1})^{l_m}$
then \[h^0(X,\bigwedge^{q}(\OO_X(g_{11},\cdots,g_{1l_1},g_{21},\cdots,g_{2l_2},g_{m1},\cdots,g_{mk})^{\oplus k}))= 0\]
thus it follows $h^0(\bigwedge^{q}T(g_{11},\cdots,g_{1l_1},g_{21},\cdots,g_{2l_2},g_{m1},\cdots,g_{mk}))=0$ and hence $T$ is stable.

\end{proof}

\begin{theorem} Let $X = (\PP^1)^{l_1}\times(\PP^3)^{l_2}\times\cdots\times(\PP^{2n+1})^{l_m}$, then the cohomology vector bundle $E$ associated to the monad 
\[
\begin{CD}
0@>>>{\OO_X(-1,\cdots,-1)^{\oplus k}} @>>^{f}>{\OO_X^{2\nu+2k}}@>>^{g}>\OO_X(1,\cdots,1)^{\oplus k} @>>>0
\end{CD}
\]
of rank $2\nu$ is simple.
\end{theorem}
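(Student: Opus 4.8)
The plan is to prove simplicity in its cohomological form, that is, to show $\Hom(E,E)=H^0(X,E\otimes E^\vee)=\CC$. Since the identity endomorphism $\mathrm{id}_E$ already spans a one-dimensional subspace of $\Hom(E,E)$, it suffices to establish the reverse bound $\dim_{\CC}\Hom(E,E)\le 1$. Everything is extracted from the two short exact sequences of the display (Definition 2.2),
\[ 0\lra M_0\lra T\lra E\lra 0, \qquad 0\lra T\lra M_1\lra M_2\lra 0, \]
where $M_0=\OO_X(-1,\cdots,-1)^{\oplus k}$, $M_1=\OO_X^{\oplus(2\nu+2k)}$, $M_2=\OO_X(1,\cdots,1)^{\oplus k}$, and $T=\ker(\overline{B})$ is the kernel bundle shown to be stable in Lemma 3.3.

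The key idea is to transport simplicity from $T$ to $E$. Over an algebraically closed field a stable bundle is simple, so $\Hom(T,T)=\CC$. Applying $\Hom(T,-)$ to the first display sequence yields the exact row
\[ 0\lra \Hom(T,M_0)\lra \Hom(T,T)\lra \Hom(T,E)\lra \operatorname{Ext}^1(T,M_0). \]
Thus, once I verify $\Hom(T,M_0)=0$ and $\operatorname{Ext}^1(T,M_0)=0$, the outer terms die and the middle collapses to $\Hom(T,E)\cong\Hom(T,T)=\CC$.

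Both vanishing statements reduce to cohomology of line bundles, which is where the computation sits. Dualizing the second display sequence gives $0\to M_2^\vee\to M_1^\vee\to T^\vee\to0$; twisting by $\OO_X(-1,\cdots,-1)$ and reading the long exact sequence expresses $\Hom(T,M_0)=H^0(T^\vee(-1,\cdots,-1))^{\oplus k}$ and $\operatorname{Ext}^1(T,M_0)=H^1(T^\vee(-1,\cdots,-1))^{\oplus k}$ in terms of $H^p(\OO_X(-1,\cdots,-1))$ and $H^p(\OO_X(-2,\cdots,-2))$ for $p=0,1,2$. Each of these twists has strictly negative total degree, so Lemma 2.11 annihilates them in the admissible range $p<\dim X-1$, giving $\Hom(T,M_0)=0$ and $\operatorname{Ext}^1(T,M_0)=0$, hence $\Hom(T,E)=\CC$.

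Finally I apply $\Hom(-,E)$ to the first display sequence to obtain the left-exact row
\[ 0\lra \Hom(E,E)\lra \Hom(T,E)\lra \Hom(M_0,E). \]
The first arrow is precomposition with the epimorphism $T\twoheadrightarrow E$, hence injective, and it sends $\mathrm{id}_E$ to the (nonzero) projection. Since $\Hom(T,E)=\CC$, this forces $\Hom(E,E)=\CC$, so $E$ is simple. I expect the \emph{main obstacle} to be the vanishing of $\operatorname{Ext}^1(T,M_0)$: it rests on $H^2(\OO_X(-2,\cdots,-2))=0$, which requires $\dim X$ to lie in the range where Lemma 2.11 applies ($2<\dim X-1$); the low-dimensional corner cases (for instance $X=(\PP^1)^2$) would need to be checked separately, typically via Serre duality on each factor.
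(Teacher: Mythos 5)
Your proposal is correct and is essentially the paper's own proof rewritten in $\Hom$/$\operatorname{Ext}$ language: your computation of $\Hom(T,E)\cong\Hom(T,T)=\CC$ via the vanishing of $H^0$ and $H^1$ of $T^\vee(-1,\cdots,-1)$ is exactly the paper's step of dualizing the second display sequence, twisting by $\OO_X(-1,\cdots,-1)$, and tensoring the first display sequence by $T^*$, with Lemma 2.11 supplying the same line-bundle vanishings, and your final left-exact row $0\to\Hom(E,E)\to\Hom(T,E)\to\Hom(M_0,E)$ is the paper's inequality $h^0(E\otimes E^*)\leq h^0(E\otimes T^*)$. The low-dimensional caveat you flag (the vanishing $H^2(\OO_X(-2,\cdots,-2))=0$ needs $2<\dim X-1$) is genuine but applies equally to the paper's own argument, which passes over it silently.
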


\begin{proof}
The display of the monad is
\[
\begin{CD}
@.@.0@.0\\
@.@.@VVV@VVV\\
0@>>>{\OO_{X}(-1,\cdots,-1)^{\oplus k}} @>>>T=\ker(\overline{B})@>>>E=\ker(\overline{B})/\im(\overline{A})@>>>0\\
@.||@.@VVV@VVV\\
0@>>>{\OO_{X}(-1,\cdots,-1)^{\oplus k}} @>>^{\overline{A}}>{\OO^{\oplus2\nu+2k}_{X}}@>>>Q=\cok(\overline{A})@>>>0\\
@.@.@V^{\overline{B}}VV@VVV\\
@.@.{\OO_{X}(1,\cdots,1)^{\oplus k} }@={\OO_{X}(1,\cdots,1)^{\oplus k} }\\
@.@.@VVV@VVV\\
@.@.0@.0
\end{CD}
\]

\noindent To prove that $E$ is simple, we rely on the stability of the bundle $T$,  analysis of the display of the monad, dualizing, tensoring and 
twisting as follows;
\\
Take the dual of $\begin{CD}0@>>>{\OO_{X}(-1,\cdots,-1)^{\oplus k}} @>>>T@>>>E@>>>0\end{CD}$ and tensor it by  $E$ to obtain
\[
\begin{CD}
0@>>>E\otimes E^* @>>>E\otimes T^* @>>>E(1,\cdots,1)^k@>>>0\end{CD}
\]
and on taking cohomology it follows
\begin{equation}
h^0(X,E\otimes E^*) \leq h^0(X,E\otimes T^*)
\end{equation}
%\end{center} 
\\
We now dualize the short exact sequence $\begin{CD}0@>>>T @>>>\OO^{2\nu+2k}@>>>{\OO_{X}(1,\cdots,1)^{\oplus k}}@>>>0\end{CD}$
to get
\[\begin{CD}0@>>>\OO_X(-1,\cdots,-1)^{\oplus k} @>>>{\OO_X^{\oplus2\nu+2k}} @>>>T^* @>>>0\end{CD}\]
which on twisting by $\OO_X(-1,\cdots,-1)$, we get 
\[\begin{CD}0@>>>\OO_X(-2,\cdots,-2)^{\oplus k} @>>>{\OO_X(-1,\cdots,-1)^{\oplus2\nu+2k}} @>>>T^*(-1,\cdots,-1) @>>>0\end{CD}\]
taking cohomology and applying Lemma 2.11 we deduce 
\[H^0(X,T^*(-1,\cdots,-1)) = H^1(X,T^*(-1,\cdots,-1)) = 0\]
Finally tensor the short exact sequence  
\[\begin{CD}0@>>>{\OO_{X}(-1,\cdots,-1)^{\oplus k}} @>>>T@>>>E@>>>0\end{CD}\] by $T^*$ we obtain 
\[\begin{CD}0@>>>T^*(-1,\cdots,-1)^{\oplus k} @>>>T\otimes T^*@>>>E\otimes T^*@>>>0\end{CD}\] and on taking cohomology we obtain
\[\begin{CD}0@>>>H^0(T^*(-1,\cdots,-1)^{\oplus k}) @>>>H^0(T\otimes T^*)@>>>H^0(E\otimes T^*)@>>>0\end{CD}\]
and since $H^1(X,T^*(-1,\cdots,-1)^k=0$ for $k>1$ from the above lemma 
and from (1) above so we have 
\[1\leq h^0(X,T\otimes T^*)\leq h^0(X,E\otimes E^*) \leq h^0(X,E\otimes T^*)\leq1\]
It thus follows $ h^0(X,E\otimes E^*) = 1 $ and thus $E$ is simple.

\end{proof}

\section{Monads and bundles on $X = (\PP^{n})^2\times(\PP^{m})^2\times(\PP^{l})^2$ }

\vspace{0.5cm}

\noindent We now set up for monads on the multiprojective space $X = (\PP^{n})^2\times(\PP^{m})^2\times(\PP^{l})^2$.

\begin{lemma}
Let $\alpha,\beta,\gamma,m,n$ and $l$ be positive integers, given 6 matrices $f_{\alpha},f_{\beta},$ and $f_{\gamma}$ and $g_{\alpha},g_{\beta},$ and $g_{\gamma}$as shown;
\vspace{0.5cm}

\[ f_{\alpha} =\left[ \begin{array}{ccccc|ccccc}
&v^{\alpha}_{n} \cdots v^{\alpha+ak}_{0} & & -u^{\alpha}_{n} \cdots  -u^{\alpha+ak}_{0}\\
\adots&\adots &\adots&\adots \\
v^{\alpha}_{n} \cdots v^{\alpha+ak}_{0} & & -u^{\alpha}_{n} \cdots  -u^{\alpha+ak}_{0} \end{array} \right]_{k\times 2{(n+k)}}\]

\vspace{0.5cm}

\[ f_{\beta} =\left[ \begin{array}{ccccc|ccccc}
&x^{\beta}_{m} \cdots x^{\beta+bk}_{0} & & -w^{\beta}_{m} \cdots  -w^{\beta+bk}_{0}\\
\adots&\adots &\adots&\adots \\
x^{\beta}_{m} \cdots x^{\beta+bk}_{0} & & -w^{\beta}_{m} \cdots  -w^{\beta+bk}_{0} \end{array} \right]_{k\times {2(m+k)}}\]

\[ f_{\gamma} =\left[ \begin{array}{ccccc|ccccc}
&z^{\gamma}_{l} \cdots z^{\gamma+ck}_{0} & & -y^{\gamma}_{l} \cdots  -y^{\gamma+ck}_{0}\\
\adots&\adots &\adots&\adots \\
z^{\gamma}_{l} \cdots z^{\gamma+ck}_{0} & & -y^{\gamma}_{l} \cdots  -y^{\gamma+ck}_{0} \end{array} \right]_{k\times {2(l+k)}}\]

\vspace{0.5cm}

\[ g_{\alpha} =\left[\begin{array}{cccccc}
u^{\alpha}_{0}\\
\vdots &\ddots
 & u^{\alpha+ak}_{0}\\
u^{\alpha}_{n} &\ddots &\vdots\\
&& u^{\alpha+ak}_{n}\\
v^{\alpha}_{0}\\
\vdots &\ddots
 & v^{\alpha+ak}_{0}\\
v^{\alpha}_{n} &\ddots &\vdots\\
&& v^{\alpha+ak}_{n}
\end{array} \right]_{{2(n+k)}\times k}\] 

\vspace{0.5cm}

\[ g_{\beta} =\left[\begin{array}{cccccc}
w^{\beta}_{0}\\
\vdots &\ddots
 & w^{\beta+bk}_{0}\\
w^{\beta}_{m} &\ddots &\vdots\\
&& w^{\beta+bk}_{m}\\
x^{\beta}_{0}\\
\vdots &\ddots
 & x^{\beta+bk}_{0}\\
x^{\beta}_{m} &\ddots &\vdots\\
&& x^{\beta+bk}_{m}
\end{array} \right]_{{2(m+k)}\times k}\] 

\vspace{0.5cm}

\[ g_{\gamma} =\left[ \begin{array}{cccccc}
y^{\gamma}_{0}\\
\vdots &\ddots
 & y^{\gamma+ck}_{0}\\
y^{\gamma}_{l} &\ddots &\vdots\\
&& y^{\gamma+ck}_{l}\\
z^{\gamma}_{0}\\
\vdots &\ddots
 & z^{\gamma+ck}_{0}\\
z^{\gamma}_{l} &\ddots &\vdots\\
&& z^{\gamma+ck}_{l}
\end{array} \right]_{{2(l+k)}\times k}\] 

\vspace{0.5cm}

we define two matrices $f$ and $g$ as follows\\
\[ f =\left[\begin{array}{ccc} f_{\alpha} & f_{\beta} & f_{\gamma} \end{array} \right]\] and

\[ g =\left[\begin{array}{cc} g_{\alpha} \\ g_{\beta} \\ g_{\gamma} \end{array} \right]\]
then :\\
\begin{enumerate}
  \item $f\cdot g = 0$ and
  \item The matrices $f$ and $g$ have maximal rank
\end{enumerate}
\end{lemma}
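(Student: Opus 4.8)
The plan is to prove the two assertions separately, in each case reducing everything to the three diagonal blocks. Because $f=[\,f_\alpha\mid f_\beta\mid f_\gamma\,]$ has its column blocks aligned with the row blocks of $g=[\,g_\alpha;g_\beta;g_\gamma\,]$, block multiplication gives $f\cdot g=f_\alpha g_\alpha+f_\beta g_\beta+f_\gamma g_\gamma$, a single $k\times k$ matrix with no cross terms. The entries of $f_\alpha g_\alpha$ are forms in the variables $u^\bullet,v^\bullet$ attached to the pair $(\PP^n)^2$, those of $f_\beta g_\beta$ in the variables $w^\bullet,x^\bullet$ attached to $(\PP^m)^2$, and those of $f_\gamma g_\gamma$ in the variables $y^\bullet,z^\bullet$ attached to $(\PP^l)^2$; these three families involve pairwise disjoint sets of homogeneous variables and the entries are of positive degree, so the sum is the zero matrix if and only if each summand is. Hence it suffices to prove $f_\alpha g_\alpha=f_\beta g_\beta=f_\gamma g_\gamma=0$ one block at a time.

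To handle $f_\alpha g_\alpha$ I would write $f_\alpha=[\,P\mid -Q\,]$ and $g_\alpha=[\,R;S\,]$, where $P$ is the $k\times(n+k)$ staircase carrying the $v^\bullet$, $Q$ the $k\times(n+k)$ staircase carrying the $u^\bullet$, and $R,S$ are the $(n+k)\times k$ column staircases carrying the $u^\bullet$ and the $v^\bullet$ respectively, so that $f_\alpha g_\alpha=PR-QS$. The decisive structural feature is that the subscripts decrease from $n$ to $0$ along the rows of $f_\alpha$ but increase from $0$ to $n$ down the columns of $g_\alpha$, while the superscripts are matched block by block; together with the compatible diagonal offsets of the staircases this makes the $(i,j)$ entry of $PR$ and the $(i,j)$ entry of $QS$ one and the same convolution sum $\sum v_s u_t$ of the two coordinate families. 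Since $u^\bullet$ and $v^\bullet$ come from different factors their products commute, so $PR=QS$ and $f_\alpha g_\alpha=0$. Replacing $(u,v)$ by $(w,x)$ and by $(y,z)$ gives $f_\beta g_\beta=0$ and $f_\gamma g_\gamma=0$ verbatim, and therefore $f\cdot g=0$.

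For the rank statement, observe that $f$ is $k\times D$ and $g$ is $D\times k$ with $D=2(n+k)+2(m+k)+2(l+k)>k$, so maximal rank means rank exactly $k$ at every point of $X$, i.e. $f$ fibrewise surjective and $g$ fibrewise injective. Since $g_\alpha$ is a row sub-block of $g$, it is enough to produce, at each point, a nonzero $k\times k$ minor of $g_\alpha$: the column staircase $R$ is triangular along its nonzero band, so anchoring the chosen rows at a coordinate $u^\bullet_i$ that is nonzero at the point yields a minor equal to a product of such coordinates, hence nonzero. The forms $u^\bullet_\bullet$ form a base-point-free degree-$\alpha$ system on the corresponding $\PP^n$ (this is where $\alpha\ge1$, or the genericity of the chosen forms, is used), so they have no common zero on $X$ and $g$ has rank $k$ everywhere. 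The argument for $f$ is symmetric, using the staircase $P$ inside $f_\alpha$ and the non-vanishing of the $v^\bullet_\bullet$.

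I expect the main obstacle to be the index bookkeeping behind $PR=QS$: one has to verify that the opposite orderings of the subscripts together with the staircase offsets really do match the two convolutions entry for entry, including the truncated boundary entries where the bands run off the edge of the matrix, so that the cancellation is exact rather than merely up to lower-order terms. A secondary, milder point is confirming that the coordinate families are genuinely base-point-free on each factor, which is exactly what promotes generic maximal rank to maximal rank at every point of $X$.
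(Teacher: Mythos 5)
Your proposal follows essentially the same route as the paper: block multiplication reduces $f\cdot g$ to the three diagonal products, each of which vanishes by the commutativity cancellation $f_\alpha g_\alpha = VU-UV=0$ (the paper writes this as $\sum_{i,j}(u^{\alpha}_i v^{\alpha}_j - u^{\alpha}_i v^{\alpha}_j)$), and maximal rank holds because the coordinate forms in any one staircase block cannot all vanish simultaneously at a point of a projective space. Your additional details --- the explicit convolution matching $PR=QS$ and the triangular $k\times k$ minor anchored at a nonvanishing coordinate --- are finer-grained versions of the paper's terse one-line assertions, not a different argument.
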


\begin{proof}

\begin{enumerate}
   \item Now $f\cdot g =\left[\begin{array}{ccc} f_{\alpha} & f_{\beta} & f_{\gamma} \end{array} \right]\left[\begin{array}{cc} g_{\alpha} \\ g_{\beta} \\ g_{\gamma} \end{array} \right]$\\
   \[=\left[\begin{array}{ccc} f_{\alpha}g_{\alpha} & f_{\beta}g_{\beta}  & f_{\gamma}g_{\gamma} \end{array} \right]\]
   for $f_{\alpha} =\left[\begin{array}{c|c} \Huge{V}_{n}^{\alpha} & -\Huge{U}_{n}^{\alpha} \end{array} \right]$,
   $f_{\beta} =\left[\begin{array}{c|c} \Huge{X}_{m}^{\beta} & -\Huge{W}_{m}^{\beta} \end{array} \right]$,
   $f_{\gamma} =\left[\begin{array}{c|c} \Huge{Z}_{l}^{\gamma} & -\Huge{Y}_{l}^{\gamma} \end{array} \right]$,\\
   $g_{\alpha}=\left[\begin{array}{cc}~\Huge{U}_{n}^{\alpha} \\ \Huge{V}_{n}^{\alpha} \end{array} \right]$,
   $g_{\beta}=\left[\begin{array}{cc}~\Huge{W}_{m}^{\beta} \\ \Huge{X}_{m}^{\beta} \end{array} \right]$,
   $g_{\gamma}=\left[\begin{array}{cc}~\Huge{Y}_{l}^{\gamma} \\ \Huge{Z}_{l}^{\gamma} \end{array} \right]$.\\
   Since we have the following
  \begin{enumerate}\renewcommand{\theenumi}{\alph{enumi}}
   \item $\displaystyle{f_{\alpha}\cdot g_{\alpha}=\sum_{i=0}^n\sum_{j=0}^n\left(u^{\alpha}_iv^{\alpha}_j-u^{\alpha}_iv^{\alpha}_j\right)}$ and\\
   
   \item $\displaystyle{f_{\beta}\cdot g_{\beta}=\sum_{i=0}^m\sum_{j=0}^m\left(w^{\beta}_ix^{\beta}_j-w^{\beta}_ix^{\beta}_j\right)}$ and
   
   \item $\displaystyle{f_{\gamma}\cdot g_{\gamma}=\sum_{i=0}^l\sum_{j=0}^l\left(y^{\gamma}_iz^{\gamma}_j-y^{\gamma}_iz^{\gamma}_j\right)}$
  \end{enumerate}
 then it follows $f\cdot g $ is the zero matrix.
\\
 \item Notice that the rank of the two matrices drops if and only if all 
 $u^{\alpha}_{0},\cdots,u^{\alpha}_{n}$, $v^{\alpha}_{0},\cdots,v^{\alpha}_{n}$,
 $w^{\beta}_{0},\cdots,w^{\beta}_{m}$, $x^{\beta}_{0},\cdots,x^{\beta}_{m}$ and 
 $y^{\gamma}_{0},\cdots,y^{\gamma}_{l}$, $z^{\gamma}_{0},\cdots,z^{\gamma}_{l}$ are zeros and this is not possible in a projective space. Hence maximal rank.
\end{enumerate}
\end{proof}

\noindent Using the matrices given in the above lemma we are going to construct a monad.

\begin{theorem}
Let $\alpha,\beta,\gamma,l,m,n$ and $k$ be positive integers. Then there exists a linear monad on $X = (\PP^{n})^2\times(\PP^{m})^2\times(\PP^{l})^2$ of the form;
\[\begin{CD}0@>>>{\OO_X(-\alpha,-\alpha,-\beta,-\beta,-\gamma,-\gamma)^{\oplus k}} @>>^{f}>{\mathscr{G}_{\alpha}\oplus\mathscr{G}_{\beta}\oplus\mathscr{G}_{\gamma}}@>>^{g}>\OO_X(\alpha,\alpha,\beta,\beta,\gamma,\gamma)^{\oplus k} @>>>0\end{CD}\]
where 
\begin{align*}
\mathscr{G}_{\alpha}:=\OO_X(-\alpha,0,0,0,0,0)^{\oplus n+\oplus k}\oplus\OO_X(0,-\alpha,0,0,0,0)^{\oplus n+\oplus k}\\
\mathscr{G}_{\beta}:=\OO_X(0,0,-\beta,0,0,0)^{\oplus m+\oplus k}\oplus\OO_X(0,0,0,-\beta,0,0)^{\oplus m+\oplus k}\\
\mathscr{G}_{\gamma}:=\OO_X(0,0,0,0,-\gamma,0)^{\oplus l+\oplus k}\oplus\OO_X(0,0,0,0,0,-\gamma)^{\oplus l+\oplus k}
\end{align*}
\end{theorem}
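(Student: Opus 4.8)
The plan is to obtain the two morphisms of the monad directly from the matrices $f$ and $g$ constructed in Lemma 4.1, to check that they are morphisms between the sheaves named in the statement, and then to appeal to the criterion of Definition 2.3: a linear monad exists exactly when the two maps have maximal rank and their composite is zero. Since Lemma 4.1 already supplies both the relation $f\cdot g=0$ and the maximal-rank property, the genuine work is the sheaf-theoretic bookkeeping that promotes the matrices to morphisms $f\colon\OO_X(-\alpha,-\alpha,-\beta,-\beta,-\gamma,-\gamma)^{\oplus k}\to\mathscr{G}_{\alpha}\oplus\mathscr{G}_{\beta}\oplus\mathscr{G}_{\gamma}$ and $g\colon\mathscr{G}_{\alpha}\oplus\mathscr{G}_{\beta}\oplus\mathscr{G}_{\gamma}\to\OO_X(\alpha,\alpha,\beta,\beta,\gamma,\gamma)^{\oplus k}$.

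First I would read the entries of each block as homogeneous forms of the appropriate multidegree: those of $f_\alpha,g_\alpha$ carry degree $\alpha$ in the two $\PP^n$ factors, those of $f_\beta,g_\beta$ degree $\beta$ in the two $\PP^m$ factors, and those of $f_\gamma,g_\gamma$ degree $\gamma$ in the two $\PP^l$ factors. With the block shapes $f=[\,f_\alpha\ f_\beta\ f_\gamma\,]$ and $g$ the vertical stack of $g_\alpha,g_\beta,g_\gamma$, one then checks that every entry is a section of precisely the line bundle needed to carry the corresponding summand of $\mathscr{G}_\alpha$, $\mathscr{G}_\beta$ or $\mathscr{G}_\gamma$ into the common source $\OO_X(-\alpha,-\alpha,-\beta,-\beta,-\gamma,-\gamma)^{\oplus k}$ and target $\OO_X(\alpha,\alpha,\beta,\beta,\gamma,\gamma)^{\oplus k}$. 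This multidegree matching, block by block across the three independent pairs of factors, is the step I expect to demand the most care, since the twists differ in each pair and the three blocks must be simultaneously compatible with the same source and target.

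Once $f$ and $g$ are bona fide morphisms, the rest is immediate. The matrix identity $f\cdot g=0$ of Lemma 4.1 expresses the vanishing of the composite $g\circ f$, so the displayed sequence is a complex. By Lemma 4.1 the rank of the matrices drops only where every homogeneous coordinate of every factor vanishes, which is empty in $X$; hence $f$ and $g$ have maximal rank at each point. Pointwise maximal rank makes $f$ a fibrewise injection, hence a subbundle inclusion with locally free cokernel, and makes $g$ a fibrewise surjection, so the complex is exact at its two outer terms and its middle cohomology $E=\ker g/\im f$ is a vector bundle. By the equivalence in Definition 2.3 this is a linear monad of exactly the asserted form, which proves the theorem. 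I would finish by remarking that, in contrast with Section 3, no appeal to the numerical inequalities of Lemma 2.13 is required, since here existence is witnessed by the explicit matrices rather than by a dimension count.
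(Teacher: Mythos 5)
Your proposal is correct and follows essentially the same route as the paper: the paper's proof likewise takes $f$ and $g$ to be the matrices of Lemma 4.1, observes that $f\in\Hom(\OO_X(-\alpha,-\alpha,-\beta,-\beta,-\gamma,-\gamma)^{\oplus k},\mathscr{G}_{\alpha}\oplus\mathscr{G}_{\beta}\oplus\mathscr{G}_{\gamma})$ and $g\in\Hom(\mathscr{G}_{\alpha}\oplus\mathscr{G}_{\beta}\oplus\mathscr{G}_{\gamma},\OO_X(\alpha,\alpha,\beta,\beta,\gamma,\gamma)^{\oplus k})$, and concludes from the lemma's maximal-rank and zero-composite properties, exactly as you do. Your write-up is in fact more careful than the paper's two-line argument, spelling out the multidegree bookkeeping and the pointwise rank conditions that the paper leaves implicit, and your closing remark that no appeal to the numerical criterion of Lemma 2.13 is needed matches the paper's practice in Section 4.
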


\begin{proof}
The maps $f$ and $g$ in the monad are the matrices given in Lemma 3.4.\\
Notice that\\
$f\in$ Hom$(\OO_X(-\alpha,-\alpha,-\beta,-\beta,-\gamma,-\gamma)^{\oplus k},\mathscr{G}_{\alpha}\oplus\mathscr{G}_{\beta}\oplus\mathscr{G}_{\gamma})$ and \\
$g\in$ Hom$(\mathscr{G}_{\alpha}\oplus\mathscr{G}_{\beta}\oplus\mathscr{G}_{\gamma},\OO_X(\alpha,\alpha,\beta,\beta,\gamma,\gamma)^{\oplus k})$. \\
Hence by the above lemma they define the desired monad.
\end{proof}

\begin{lemma}
Let $K$ be the kernel bundle that sits in the short exact sequence
\[\begin{CD}0@>>>T @>>>\mathscr{G}_{\alpha}\oplus\mathscr{G}_{\beta}\oplus\mathscr{G}_{\gamma}@>>^{g}>\OO_X(1,\cdots,1)^{\oplus k} @>>>0\end{CD}\]
where 
\begin{align*}
\mathscr{G}_{\alpha}:=\OO_X(-\alpha,0,0,0,0,0)^{\oplus n+\oplus k}\oplus\OO_X(0,-\alpha,0,0,0,0)^{\oplus n+\oplus k}\\
\mathscr{G}_{\beta}:=\OO_X(0,0,-\beta,0,0,0)^{\oplus m+\oplus k}\oplus\OO_X(0,0,0,-\beta,0,0)^{\oplus m+\oplus k}\\
\mathscr{G}_{\gamma}:=\OO_X(0,0,0,0,-\gamma,0)^{\oplus l+\oplus k}\oplus\OO_X(0,0,0,0,0,-\gamma)^{\oplus l+\oplus k}
\end{align*}
\end{lemma}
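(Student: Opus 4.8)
The assertion to be proved (in parallel with Lemma 3.3) is that the kernel bundle $T$ is $\mathscr{L}$-stable for the ample polarization $\mathscr{L}=\OO_X(\alpha,\alpha,\beta,\beta,\gamma,\gamma)$. The plan is to apply the generalized Hoppe criterion in the form of Proposition 2.6: it suffices to show
\[ H^0\!\left(X,\,(\textstyle\bigwedge^q T)_{\mathscr{L}-norm}(p_1,\dots,p_6)\right)=0 \]
for every $1\le q\le \rk(T)-1$ and every $B=\OO_X(p_1,\dots,p_6)$ with $\delta_{\mathscr{L}}(B)\le 0$.

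First I would read off the first Chern class from the defining sequence. Both the middle term $\mathscr{G}_{\alpha}\oplus\mathscr{G}_{\beta}\oplus\mathscr{G}_{\gamma}$ and the cokernel $\OO_X(\alpha,\alpha,\beta,\beta,\gamma,\gamma)^{\oplus k}$ are direct sums of line bundles, so additivity of $c_1$ gives
\[ c_1(T)=\bigl(-(n+2k)\alpha,\,-(n+2k)\alpha,\,-(m+2k)\beta,\,-(m+2k)\beta,\,-(l+2k)\gamma,\,-(l+2k)\gamma\bigr). \]
Every entry is negative and $\mathscr{L}$ is ample, hence $\deg_{\mathscr{L}}(T)<0$; as in Lemma 3.3 this makes the $\mathscr{L}$-normalization trivial, so $(\bigwedge^q T)_{\mathscr{L}-norm}=\bigwedge^q T$ and it remains only to prove $h^0(\bigwedge^q T(B))=0$.

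The mechanism then mirrors Lemma 3.3. I would twist the defining sequence by $B$, apply the exterior-power exact sequence (Proposition 2.7) to extract $\bigwedge^q$, and pass to global sections; left-exactness produces an injection
\[ H^0\bigl(\textstyle\bigwedge^q T(B)\bigr)\hookrightarrow H^0\bigl(\textstyle\bigwedge^q\bigl((\mathscr{G}_{\alpha}\oplus\mathscr{G}_{\beta}\oplus\mathscr{G}_{\gamma})(B)\bigr)\bigr). \]
Because the middle bundle is a direct sum of line bundles, $\bigwedge^q$ of its twist is again a direct sum of line bundles on $X$, and the Künneth decomposition of the cohomology of exterior powers of a sum (Lemma 2.12) expresses the $h^0$ of each summand as a product of factor-wise contributions. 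It then suffices to show, for every summand, that at least one $\PP$-factor carries a strictly negative total degree, whence Lemma 2.11 forces the corresponding $h^0$ to vanish; this annihilates the right-hand group, hence $h^0(\bigwedge^q T(B))=0$ and $T$ is stable.

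\textbf{Main obstacle.} Two features make the bookkeeping harder than in Lemma 3.3, where the middle term was the trivial bundle $\OO_X^{\oplus b}$. First, here $\mathscr{G}_{\alpha}\oplus\mathscr{G}_{\beta}\oplus\mathscr{G}_{\gamma}$ is a genuine sum of six distinct line bundles, so a generic summand of $\bigwedge^q$ has multidegree $\bigl(-a_1\alpha,-a_2\alpha,-b_1\beta,-b_2\beta,-c_1\gamma,-c_2\gamma\bigr)+q\,(p_1,\dots,p_6)$ with $a_1+a_2+b_1+b_2+c_1+c_2=q$, and the sign condition must be checked uniformly over all such index patterns. Second, the polarization is \emph{unequal}: $\delta_{\mathscr{L}}(B)$ is a weighted mixed-intersection number in which $\alpha,\beta,\gamma$ occur with the combinatorial multiplicities coming from $\mathscr{L}^{d-1}$ with $d=2n+2m+2l$, so the hypothesis $\delta_{\mathscr{L}}(B)\le 0$ is a single weighted inequality rather than the block-wise negativity available in Section 3. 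The crux of the proof is therefore to translate this one weighted inequality, together with the extra $-\alpha,-\beta,-\gamma$ shifts carried by the summands, into the per-factor negativity demanded by Lemma 2.11, and to verify that it holds across \emph{every} summand of $\bigwedge^q$ and every admissible twist $B$.
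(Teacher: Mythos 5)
Your proposal follows the paper's proof of this lemma essentially step for step: the same $c_1(T)$ computation from additivity over the line-bundle summands, the same observation that $\deg_{\mathscr{L}}T<0$ makes the normalization trivial, the same twist--exterior-power--$H^0$-injection mechanism via the Hoppe criterion and the exact sequence of exterior powers, and the same final appeal to K\"unneth-type vanishing for $\bigwedge^q$ of the twisted middle term. The ``main obstacle'' you flag in fact resolves in one line --- since the weights appearing in $\delta_{\mathscr{L}}(B)$ are strictly positive, $\delta_{\mathscr{L}}(B)\le 0$ with $B\ne 0$ forces some $p_i<0$, and every summand of $\bigwedge^q\bigl((\mathscr{G}_{\alpha}\oplus\mathscr{G}_{\beta}\oplus\mathscr{G}_{\gamma})(B)\bigr)$ then carries a strictly negative entry in that same coordinate (the $\mathscr{G}$-twists being coordinatewise non-positive), so its $H^0$ vanishes by K\"unneth and Theorem 2.10 --- which is precisely the content the paper compresses into its bare citation of Theorem 2.10 and Lemma 2.11, and your weighted formulation of the admissible twists is actually the more careful one, as the paper's proof only verifies the unweighted condition $p_1+\cdots+p_6<0$.
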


\begin{proof}

We need to show that $H^0(X,\bigwedge^q T(p_1,p_2,p_3,p_4,p_5,p_6))=0$ for all $p_1+p_2+p_3+p_4+p_5+p_6<0$ and $1\leq q\leq \rk(T)-1$.\\
\\
Consider the ample line bundle $\mathscr{L} = \OO_X(\alpha,\alpha,\beta,\beta,\gamma,\gamma) = \OO(L)$. \\
Its class in 
$\pic(X)= \langle h_{1n},h_{2n},h_{1m},h_{2m},h_{1l},h_{2l}\rangle$ corresponds to the class\\
$1\cdot[g_1\times\PP^{n}]+1\cdot[\PP^{n}\times g_2]+1\cdot[g_3\times\PP^{m}]+1\cdot[\PP^{m}\times g_4]+1\cdot[g_5\times\PP^{l}]+1\cdot[\PP^{l}\times g_6]$ and
\\
Now from the display diagram of the monad we get \\ 
%\begin{align*}
%\begin{split}
$c_1(K) = c_1(\mathscr{G}_{\alpha}\oplus\mathscr{G}_{\beta}\oplus\mathscr{G}_{\gamma}) - c_1(\OO_X(\alpha,\alpha,\beta,\beta,\gamma,\gamma)^{\oplus k})\\
       = (n+k)[(-\alpha,0,0,0,0,0)+(0,-\alpha,0,0,0,0)] + (m+k)[(0,0,-\beta,0,0,0)+(0,0,0,-\beta,0,0)] + (l+k)[(0,0,0,0,-\gamma,0)+(0,0,0,0,0,-\gamma)]- k(\alpha,\alpha,\beta,\beta,\gamma,\gamma) \\
       = (-n\alpha-2k\alpha,-n\alpha-2k\alpha,-m\beta-2k\beta,-m\beta-2k\beta,-l\gamma-2k\gamma,-l\gamma-2k\gamma) $\\
%\end{split}
%\end{align*}
Since $L^{2(n+m+l)}>0$ the degree of $T$ is $\deg_{\mathscr{L}}T = c_1(T)\cdot\mathscr{L}^{d-1}$\\
\begin{align*}
\begin{split}
=-(n+m+l+6k)([g_1\times\PP^{n}]+[\PP^{n}\times g_2]+[g_3\times\PP^{m}]+[\PP^{m}\times g_4]+[g_5\times\PP^{l}]+[\PP^{l}\times g_6])\\
(1\cdot[g_1\times\PP^{n}]+1\cdot[\PP^{n}\times g_2]+1\cdot[g_3\times\PP^{m}]+1\cdot[\PP^{m}\times g_4]+1\cdot[g_5\times\PP^{l}]+1\cdot[\PP^{l}\times g_6])^{2n+2m+2l-1}\\
\end{split}
\end{align*}
$=-(n+m+l+6k)L^{2(n+m+l)}< 0$\\
\\
Since $\deg_{\mathscr{L}}T<0$, then $(\bigwedge^q T)_{\mathscr{L}-norm} = (\bigwedge^q T)$ and  it suffices by 
Lemma 2.7, to prove that $h^0(\bigwedge^q T(p_1,p_2,p_3,p_4,p_5,p_6)) = 0$ with $p_1+p_2+p_3+p_4+p_5+p_6<0$ and $1\leq q\leq \rk(T)-1$.\\
\\
First, twist the exact sequence 
\[\begin{CD}0@>>>T @>>>\mathscr{G}_{\alpha}\oplus\mathscr{G}_{\beta}\oplus\mathscr{G}_{\gamma}@>>^{g}>\OO_X(1,\cdots,1)^{\oplus k} @>>>0\end{CD}\]
by $\OO_X(p_1,p_2,p_3,p_4,p_5,p_6)$ we get,
\[
0\lra T(p_1,p_2,p_3,p_4,p_5,p_6)\lra\mathscr{\overline{G}}_{\alpha}\oplus\mathscr{\overline{G}}_{\beta}\oplus\mathscr{\overline{G}}_{\gamma}\lra\OO_X(1+p_1,1+p_2,1+p_3,1+p_4,1+p_5,1+p_6)^{\oplus k}\lra0\]
where 
\begin{align*}
\mathscr{\overline{G}}_{\alpha}:=\OO_X(p_1-\alpha,p_2,p_3,p_4,p_5,p_6)^{\oplus n+\oplus k}\oplus\OO_X(p_1,p_2-\alpha,p_3,p_4,p_5,p_6)^{\oplus n+\oplus k}\\
\mathscr{\overline{G}}_{\beta}:=\OO_X(p_1,p_2,p_3-\beta,p_4,p_5,p_6)^{\oplus m+\oplus k}\oplus\OO_X(p_1,p_2,p_3,p_4-\beta,p_5,p_6)^{\oplus m+\oplus k}\\
\mathscr{\overline{G}}_{\gamma}:=\OO_X(p_1,p_2,p_3,p_4,p_5-\gamma,p_6)^{\oplus l+\oplus k}\oplus\OO_X(p_1,p_2,p_3,p_4,p_5,p_6-\gamma)^{\oplus l+\oplus k}
\end{align*}

and taking the exterior powers of the sequence by Fact 2.8 we get
\[0\lra \bigwedge^q T(p_1,p_2,p_3,p_4,p_5,p_6) \lra \bigwedge^q (\mathscr{\overline{G}}_{\alpha}\oplus\mathscr{\overline{G}}_{\beta}\oplus\mathscr{\overline{G}}_{\gamma)}\lra \cdots\]
Taking cohomology we have the injection:
\[0\lra H^0(X,\bigwedge^{q}T(p_1,p_2,p_3,p_4,p_5,p_6))\hookrightarrow H^0(X,\bigwedge^q (\mathscr{\overline{G}}_{\alpha}\oplus\mathscr{\overline{G}}_{\beta}\oplus\mathscr{\overline{G}}_{\gamma}))\]
From Theorem 2.10 and Lemma 2.11 we have $H^0(X,\bigwedge^q (\mathscr{\overline{G}}_{\alpha}\oplus\mathscr{\overline{G}}_{\beta}\oplus\mathscr{\overline{G}}_{\gamma}))=0$.\\
\\
$\Longrightarrow$ $H^0(X,\bigwedge^{q}T(p_1,p_2,p_3,p_4,p_5,p_6)) =  H^0(X,\bigwedge^q (\mathscr{\overline{G}}_1\oplus\cdots\oplus\mathscr{\overline{G}}_n)=0$ hence $T$ is stable.

\end{proof}

\begin{lemma} The cohomology bundle $E$ associated to the monad
\[\begin{CD}0@>>>{\OO_X(-\alpha,-\alpha,-\beta,-\beta,-\gamma,-\gamma)^{\oplus k}} @>>^{f}>{\mathscr{G}_{\alpha}\oplus\mathscr{G}_{\beta}\oplus\mathscr{G}_{\gamma}}@>>^{g}>\OO_X(\alpha,\alpha,\beta,\beta,\gamma,\gamma)^{\oplus k} @>>>0\end{CD}\]
of rank $2(n+m+l+2k)$ is simple where $X = (\PP^{n})^2\times(\PP^{m})^2\times(\PP^{l})^2$.
\end{lemma}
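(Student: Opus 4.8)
The plan is to adapt the proof of Theorem 3.4 to the present setting, deducing simplicity of $E$ from the stability of the kernel bundle $T=\ker(g)$ established in Lemma 4.3. Write $\mathscr{G}:=\mathscr{G}_\alpha\oplus\mathscr{G}_\beta\oplus\mathscr{G}_\gamma$. The display of the monad supplies
\[0\to\OO_X(-\alpha,-\alpha,-\beta,-\beta,-\gamma,-\gamma)^{\oplus k}\to T\to E\to0\]
and
\[0\to T\to\mathscr{G}\to\OO_X(\alpha,\alpha,\beta,\beta,\gamma,\gamma)^{\oplus k}\to0.\]
The objective is the chain
\[1\leq h^0(X,T\otimes T^*)\leq h^0(X,E\otimes E^*)\leq h^0(X,E\otimes T^*)\leq1,\]
which forces $h^0(X,E\otimes E^*)=1$. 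The left end is anchored by the fact that a stable bundle is simple, so Lemma 4.3 gives $h^0(X,T\otimes T^*)=1$, while the identity endomorphism gives $h^0(X,E\otimes E^*)\geq1$.

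To produce the middle comparisons I would first dualize the sequence relating $T$ and $E$ and tensor by $E$, obtaining
\[0\to E\otimes E^*\to E\otimes T^*\to E(\alpha,\alpha,\beta,\beta,\gamma,\gamma)^{\oplus k}\to0,\]
whose cohomology injection yields $h^0(X,E\otimes E^*)\leq h^0(X,E\otimes T^*)$. Tensoring the same sequence by $T^*$ gives
\[0\to T^*(-\alpha,-\alpha,-\beta,-\beta,-\gamma,-\gamma)^{\oplus k}\to T\otimes T^*\to E\otimes T^*\to0,\]
and once the outer term is known to have vanishing $H^0$ and $H^1$, the long exact sequence makes $H^0(X,T\otimes T^*)\to H^0(X,E\otimes T^*)$ an isomorphism, whence $h^0(X,E\otimes T^*)=h^0(X,T\otimes T^*)=1$ and the chain closes.

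The heart of the argument, and the only genuine departure from Theorem 3.4, is therefore the vanishing
\[H^0(X,T^*(-\alpha,-\alpha,-\beta,-\beta,-\gamma,-\gamma))=H^1(X,T^*(-\alpha,-\alpha,-\beta,-\beta,-\gamma,-\gamma))=0.\]
In Theorem 3.4 this was immediate because the middle of the monad was the trivial bundle; here the middle term is the genuinely twisted bundle $\mathscr{G}$, so more care is needed. I would dualize $0\to T\to\mathscr{G}\to\OO_X(\alpha,\alpha,\beta,\beta,\gamma,\gamma)^{\oplus k}\to0$ and twist by $\OO_X(-\alpha,-\alpha,-\beta,-\beta,-\gamma,-\gamma)$ to obtain
\[0\to\OO_X(-2\alpha,-2\alpha,-2\beta,-2\beta,-2\gamma,-2\gamma)^{\oplus k}\to\mathscr{G}^*(-\alpha,-\alpha,-\beta,-\beta,-\gamma,-\gamma)\to T^*(-\alpha,-\alpha,-\beta,-\beta,-\gamma,-\gamma)\to0,\]
and then read off the long exact cohomology sequence.

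The anticipated obstacle is purely the summand-by-summand bookkeeping for $\mathscr{G}^*(-\alpha,-\alpha,-\beta,-\beta,-\gamma,-\gamma)$: each of its line-bundle summands must be shown to have strictly negative total degree. For instance the two pieces coming from $\mathscr{G}_\alpha^*$ carry degrees $(0,-\alpha,-\beta,-\beta,-\gamma,-\gamma)$ and $(-\alpha,0,-\beta,-\beta,-\gamma,-\gamma)$, of total degree $-\alpha-2\beta-2\gamma<0$, with the $\beta$- and $\gamma$-blocks handled identically, while the outer twist $\OO_X(-2\alpha,-2\alpha,-2\beta,-2\beta,-2\gamma,-2\gamma)$ is patently of negative total degree. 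Since $\dim X=2(n+m+l)>3$, Lemma 2.11 then annihilates $H^0$, $H^1$ and $H^2$ of every such summand, and the long exact sequence forces the desired vanishing of $H^0$ and $H^1$ of $T^*(-\alpha,-\alpha,-\beta,-\beta,-\gamma,-\gamma)$. Feeding this back collapses the chain to $h^0(X,E\otimes E^*)=1$, so $E$ is simple.
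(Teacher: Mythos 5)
Your proposal follows the paper's proof of this lemma essentially step for step: the same two inequalities obtained by dualizing the display sequences and tensoring with $E$ and $T^*$ respectively, the same key vanishing $H^0(X,T^*(-\alpha,-\alpha,-\beta,-\beta,-\gamma,-\gamma))=H^1(X,T^*(-\alpha,-\alpha,-\beta,-\beta,-\gamma,-\gamma))=0$ deduced from Lemma 2.11 applied summand-by-summand to the twisted dual of the middle term, and the same closing chain $1\leq h^0(X,E\otimes E^*)\leq h^0(X,E\otimes T^*)=h^0(X,T\otimes T^*)=1$ anchored by the stability (hence simplicity) of $T$ from Lemma 4.3. If anything you are slightly more careful than the paper, which writes the twisted summands $\mathscr{G}'_{\alpha},\mathscr{G}'_{\beta},\mathscr{G}'_{\gamma}$ as twists of $\mathscr{G}$ rather than of its dual $\mathscr{G}^*$ as you correctly do — a slip without consequence, since in both versions every summand has strictly negative total degree.
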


\begin{proof}
The display of the monad is
\[
\begin{CD}
@.@.0@.0\\
@.@.@VVV@VVV\\
0@>>>{\OO_X(-\alpha,-\alpha,-\beta,-\beta,-\gamma,-\gamma)^{\oplus k}} @>>>T=\ker g@>>>E\rightarrow0\\
@.||@.@VVV@VVV\\
0@>>>{\OO_X(-\alpha,-\alpha,-\beta,-\beta,-\gamma,-\gamma)^{\oplus k}} @>>^{f}>{\mathscr{G}_n\oplus\cdots\oplus\mathscr{G}_m}@>>>Q=\cok f\rightarrow0\\
@.@.@V^{g}VV@VVV\\
@.@.{\OO_X(\alpha,\alpha,\beta,\beta,\gamma,\gamma)^{\oplus k}}@={\OO_X(\alpha,\alpha,\beta,\beta,\gamma,\gamma)^{\oplus k}}\\
@.@.@VVV@VVV\\
@.@.0@.0
\end{CD}
\]

\noindent Since $T$ the kernel of the map $g$ is stable from the above Lemma 4.3, we prove that the cohomology bundle $E=\ker g/\im f$  is simple.\\
\\
The first step is to take the dual of the short exact sequence 
\[\begin{CD}
0@>>>\OO_X(-\alpha,-\alpha,-\beta,-\beta,-\gamma,-\gamma)^{\oplus k} @>>>T@>>>E @>>>0
\end{CD}\]
to get
\[
\begin{CD}
0@>>>E^* @>>>T^* @>>>\OO_X(\alpha,\alpha,\beta,\beta,\gamma,\gamma)^{\oplus k}@>>>0.
\end{CD}
\]
Tensoring by $E$ we get\\
\[
\begin{CD}
0@>>>E\otimes E^* @>>>E\otimes T^* @>>>E(\alpha,\alpha,\beta,\beta,\gamma,\gamma)^k@>>>0.
\end{CD}
\]
Now taking cohomology gives:
\[\begin{CD}
0@>>>H^0(X,E\otimes E^*) @>>>H^0(X,E\otimes T^*) @>>>H^0(E(\alpha,\alpha,\beta,\beta,\gamma,\gamma)^{\oplus k})@>>>\cdots
\end{CD}\]
\\
which implies that 
%\begin{center}
\begin{equation}
h^0(X,E\otimes E^*) \leq h^0(X,E\otimes T^*)
\end{equation}
%\end{center} 
\\
Now we dualize the short exact sequence
\[\begin{CD}
0@>>>T @>>>\mathscr{G}_{\alpha}\oplus\mathscr{G}_{\beta}\oplus\mathscr{G}_{\gamma} @>>>\OO_X(\alpha,\alpha,\beta,\beta,\gamma,\gamma)^{\oplus k} @>>>0
\end{CD}\]
\\
to get
\[\begin{CD}
0@>>>\OO_X(-\alpha,-\alpha,-\beta,-\beta,-\gamma,-\gamma)^{\oplus k} @>>>{\mathscr{G}_{\alpha}\oplus\mathscr{G}_{\beta}\oplus\mathscr{G}_{\gamma} } @>>>T^* @>>>0
\end{CD}\]
\\
Twisting the short exact sequence above by $\OO_X(-\alpha,-\alpha,-\beta,-\beta,-\gamma,-\gamma)$ yields
\[\begin{CD}
0@>>>\OO_X(-2\alpha,-2\alpha,-2\beta,-2\beta,-2\gamma,-2\gamma)^{\oplus k} @>>>{\mathscr{G}'_{\alpha}\oplus\mathscr{G}'_{\beta}\oplus\mathscr{G}'_{\gamma} }\\ @>>>T^*(-\alpha,-\alpha,-\beta,-\beta,-\gamma,-\gamma) @>>>0
\end{CD}\]
where 
\begin{align*}
\mathscr{G}'_{\alpha}:=\OO_X(-2\alpha,-\alpha,-\beta,-\beta,-\gamma,-\gamma)^{\oplus n+\oplus k}\oplus\OO_X(-\alpha,-2\alpha,-\beta,-\beta,-\gamma,-\gamma)^{\oplus n+\oplus k}\\
\mathscr{G}'_{\beta}:=\OO_X(-\alpha,-\alpha,-2\beta,-\beta,-\gamma,-\gamma)^{\oplus m+\oplus k}\oplus\OO_X(-\alpha,-\alpha,-\beta,-2\beta,-\gamma,-\gamma)^{\oplus m+\oplus k}\\
\mathscr{G}'_{\gamma}:=\OO_X(-\alpha,-\alpha,-\beta,-\beta,-2\gamma,-\gamma)^{\oplus l+\oplus k}\oplus\OO_X(-\alpha,-\alpha,-\beta,-\beta,-\gamma,-2\gamma)^{\oplus l+\oplus k}
\end{align*}%
next on taking cohomology one gets
\[\begin{CD}
0\lra H^0(\OO_X(-2\alpha,-2\alpha,-2\beta,-2\beta,-2\gamma,-2\gamma)^k) \lra H^0(\mathscr{G'}_{\alpha})\oplus H^0(\mathscr{G'}_{\beta})\oplus H^0(\mathscr{G'}_{\gamma})\lra\\\lra H^0(T^*(-\alpha,-\alpha,-\beta,-\beta,-\gamma,-\gamma) )\lra\\
\lra H^1(\OO_X(-2\alpha,-2\alpha,-2\beta,-2\beta,-2\gamma,-2\gamma)^k) \lra H^1(\mathscr{G'}_{\alpha})\oplus H^1(\mathscr{G'}_{\beta})\oplus H^0(\mathscr{G'}_{\gamma})\lra\\\lra H^1(T^*(-\alpha,-\alpha,-\beta,-\beta,-\gamma,-\gamma) )\lra\\
\lra H^2(X,\OO_X(-2\alpha,-2\alpha,-2\beta,-2\beta,-2\gamma,-2\gamma)^k) \lra H^2(\mathscr{G'}_{\alpha})\oplus H^2(\mathscr{G'}_{\beta})\oplus H^2(\mathscr{G'}_{\gamma})\lra\\\lra H^2(T^*(-\alpha,-\alpha,-\beta,-\beta,-\gamma,-\gamma) )\lra\cdots
\end{CD}
\]
As a consequence of from Theorem 2.10 and Lemma 2.11 we deduce  that \\
$H^0(X,T^*(-\alpha,-\alpha,-\beta,-\beta,-\gamma,-\gamma)) = 0$ and $H^1(X,T^*(-\alpha,-\alpha,-\beta,-\beta,-\gamma,-\gamma)) = 0$ 
\\
Lastly, tensor the short exact sequence
\[
\begin{CD}
0@>>>\OO(-\alpha,-\alpha,-\beta,-\beta,-\gamma,-\gamma)^{\oplus k} @>>>T @>>> E@>>>0\\
\end{CD}
\]
by $T^*$ to get
\[
\begin{CD}
0@>>>T^*(-\alpha,-\alpha,-\beta,-\beta,-\gamma,-\gamma)^k @>>>T\otimes T^* @>>> E\otimes T^*@>>>0\\
\end{CD}
\]
and taking cohomology we have
\\
\[
\begin{CD}
0@>>>H^0(X,T^*(-\alpha,-\alpha,-\beta,-\beta,-\gamma,-\gamma)^k) @>>>H^0(X,T\otimes T^*) @>>> H^0(X,E\otimes T^*)@>>>\\
@>>>H^1(X,T^*(-\alpha,-\alpha,-\beta,-\beta,-\gamma,-\gamma)^k)@>>>\cdots
\end{CD}
\]
\\
But since  $H^0(X,T^*(-\alpha,-\alpha,-\beta,-\beta,-\gamma,-\gamma)) = H^1(X,T^*(-\alpha,-\alpha,-\beta,-\beta,-\gamma,-\gamma)) = 0$ from above then  it follows $H^1(X,T^*(-\alpha,-\alpha,-\beta,-\beta,-\gamma,-\gamma)^k)=0$ for $k>1$.\\
\\
so we have 
\\
\[
\begin{CD}
0@>>>H^0(X,T^*(-\alpha,-\alpha,-\beta,-\beta,-\gamma,-\gamma)^{k}) @>>>H^0(X,T\otimes T^*) @>>> H^0(X,E\otimes T^*)@>>>0
\end{CD}
\]
\\
This implies that 
\begin{equation}
h^0(X,T\otimes T^*) \leq h^0(X,E\otimes T^*)
\end{equation}
\\
Since $T$ is stable then it follows that it is simple which implies $h^0(X,T\otimes T^*)=1$.\\
\\
From $(3)$ and $(4)$ and putting these together we have;\\
\[1\leq h^0(X,E\otimes E^*) \leq h^0(X,E\otimes T^*) = h^0(X,T\otimes T^*) = 1\]\\
\\
We have $ h^0(X,E\otimes E^*) = 1 $ and therefore $E$ is simple.

\end{proof}

\begin{theorem}
Let $X = (\PP^{n})^2\times(\PP^{m})^2\times(\PP^{l})^2$ and $\mathscr{L}=\OO_X(\alpha,\alpha,\beta,\beta,\gamma,\gamma)$ and ample line bundle,
then the monad 
\[\begin{CD}0@>>>{\OO_X(-\alpha,-\alpha,-\beta,-\beta,-\gamma,-\gamma)^{\oplus k}} @>>^{f}>{\mathscr{G}_{\alpha}\oplus\mathscr{G}_{\beta}\oplus\mathscr{G}_{\gamma}}@>>^{g}>\OO_X(\alpha,\alpha,\beta,\beta,\gamma,\gamma)^{\oplus k} @>>>0\end{CD}\]
where 
\begin{align*}
\mathscr{G}_{\alpha}:=\OO_X(-\alpha,0,0,0,0,0)^{\oplus n+\oplus k}\oplus\OO_X(0,-\alpha,0,0,0,0)^{\oplus n+\oplus k}\\
\mathscr{G}_{\beta}:=\OO_X(0,0,-\beta,0,0,0)^{\oplus m+\oplus k}\oplus\OO_X(0,0,0,-\beta,0,0)^{\oplus m+\oplus k}\\
\mathscr{G}_{\gamma}:=\OO_X(0,0,0,0,-\gamma,0)^{\oplus l+\oplus k}\oplus\OO_X(0,0,0,0,0,-\gamma)^{\oplus l+\oplus k}
\end{align*} has the properties:
\begin{enumerate}\renewcommand{\theenumi}{\alph{enumi}}
 \item The kernel bundle, $T=\ker(g)$ is $\mathscr{L}$-stable.
 
 \item The cohomology bundle $E$ of $\rk(E)=2(n+m+l+2k)$ is simple.

\end{enumerate}

\end{theorem}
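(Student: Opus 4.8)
The plan is to assemble the statement from the three ingredients already built in this section: Theorem 4.2 supplies the monad itself (the maps $f$ and $g$ of Lemma 3.4 have maximal rank and satisfy $f\cdot g=0$, so $f$ is injective, $g$ is surjective, and the complex is a genuine monad), while parts (a) and (b) are precisely the conclusions of Lemma 4.3 and Lemma 4.4. At the top level I would therefore simply invoke these three results. To keep the argument self-contained, however, I would reconstruct the two nontrivial steps as follows.

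For part (a), the stability of $T=\ker(g)$, I would appeal to the corollary of the Generalized Hoppe Criterion (Lemma 2.6). First I read off $c_1(T)$ from the defining sequence $0\to T\to \mathscr{G}_\alpha\oplus\mathscr{G}_\beta\oplus\mathscr{G}_\gamma\xrightarrow{g}\OO_X(\alpha,\alpha,\beta,\beta,\gamma,\gamma)^{\oplus k}\to 0$, obtaining a strictly negative $\mathscr{L}$-degree, which forces $(\bigwedge^q T)_{\mathscr{L}\text{-norm}}=\bigwedge^q T$ and reduces the criterion to the vanishing of $H^0(X,\bigwedge^q T(p_1,\dots,p_6))$ for all $p_1+\cdots+p_6<0$ and $1\le q\le \rk(T)-1$. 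I would then twist the defining sequence by $\OO_X(p_1,\dots,p_6)$, take $q$-th exterior powers via Fact 2.8 to obtain an injection of $H^0(\bigwedge^q T(\underline p))$ into $H^0$ of exterior powers of the twisted summands of $\mathscr{G}_\alpha\oplus\mathscr{G}_\beta\oplus\mathscr{G}_\gamma$, and finally use the K\"unneth formula (Theorem 2.9) together with the cohomology vanishing results (Theorem 2.10 and Lemma 2.11) to kill every such $H^0$ under the hypothesis $\sum_i p_i<0$.

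For part (b), the simplicity of $E$, I would run the standard display-chasing argument. From the display I extract the two short exact sequences $0\to M_0\to T\to E\to 0$ and $0\to T\to M_1\to Q\to 0$, where $M_0=\OO_X(-\alpha,-\alpha,-\beta,-\beta,-\gamma,-\gamma)^{\oplus k}$. Dualizing the first and tensoring by $E$ yields $h^0(E\otimes E^*)\le h^0(E\otimes T^*)$. Dualizing the second, twisting by $M_0$, and applying K\"unneth together with the cohomology lemmas gives $H^0(T^*(-\alpha,-\alpha,-\beta,-\beta,-\gamma,-\gamma))=H^1(T^*(-\alpha,-\alpha,-\beta,-\beta,-\gamma,-\gamma))=0$; tensoring the first sequence by $T^*$ and taking cohomology then forces $h^0(E\otimes T^*)=h^0(T\otimes T^*)$. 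Since $T$ is stable by part (a) it is simple, so $h^0(T\otimes T^*)=1$, and chaining the inequalities yields $1\le h^0(E\otimes E^*)\le h^0(E\otimes T^*)=h^0(T\otimes T^*)=1$, whence $E$ is simple.

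The main obstacle is the cohomology bookkeeping underlying part (a): verifying that every $H^0$-term arising from the exterior powers of the six twisted line-bundle summands vanishes under the single hypothesis $\sum_i p_i<0$. Because the six projective factors come in pairs and the twists interact across the K\"unneth decomposition, one must ensure that in each resulting tensor factor at least one factor $H^0(\PP^\bullet,\OO(d))$ with $d<0$ occurs; isolating such a negative-degree factor in every summand of the K\"unneth expansion is the delicate point. By comparison, part (b) is essentially formal once stability and the two intermediate vanishings are in hand.
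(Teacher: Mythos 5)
Your proposal is correct and takes essentially the same approach as the paper: the paper's own proof of this theorem is literally the one-line citation of Lemma 4.3 for (a) and Lemma 4.4 for (b), exactly as you propose. Your reconstructions of those two lemmas (negative $\mathscr{L}$-degree forcing $(\bigwedge^q T)_{\mathscr{L}\text{-norm}}=\bigwedge^q T$, twisting, Fact 2.8, and the K\"unneth-based vanishing of Theorem 2.10 and Lemma 2.11 for stability; the display chase $1\le h^0(X,E\otimes E^*)\le h^0(X,E\otimes T^*)=h^0(X,T\otimes T^*)=1$ via the vanishing of $H^0$ and $H^1$ of $T^*(-\alpha,-\alpha,-\beta,-\beta,-\gamma,-\gamma)$ for simplicity) match the paper's arguments step for step.
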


\begin{proof}
(a) Follows from Lemma 4.3 and (b) follows from Lemma 4.4.
\end{proof}

\vspace{1cm}

\noindent \textbf{Data Availability statement}
My manuscript has no associate data.

\vspace{1cm}

\noindent \textbf{Conflict of interest}
On behalf of all authors, the corresponding author states that there is no conflict of interest.

\end{document}